\documentclass[12pt, a4paper, oneside]{amsart}
\usepackage{amsmath, amsthm, amssymb, geometry}
\usepackage[colorlinks=false]{hyperref}

\newtheorem{thm}{Theorem}[section]
\newtheorem{prop}[thm]{Proposition}
\newtheorem{lem}[thm]{Lemma}
\newtheorem{cor}[thm]{Corollary}

\theoremstyle{definition}
\newtheorem{defn}[thm]{Definition}

\theoremstyle{remark}
\newtheorem{ex}[thm]{Example}

\newtheorem{rem}[thm]{Remark}

\def\N{\mathbb{N}}

\def\R{\mathbb{R}}
\def\C{\mathbb{C}}

\def\D{\mathbb{D}}
\def\B{\mathbb{B}}

\def\cO{\mathcal{O}}
\def\cP{\mathcal{P}}

\def\cS{\mathcal{S}}

\def\cU{\mathcal{U}}
\def\cV{\mathcal{V}}
\def\cW{\mathcal{W}}

\def\cZ{\mathcal{Z}}

\def\fI{\mathfrak{I}}

\def\Aut{\operatorname{Aut}}

\def\id{\operatorname{id}}
\def\ev{\operatorname{ev}}

\begin{document}

\title[Dense holomorphic curves in spaces of holomorphic maps]{Dense holomorphic curves in spaces of holomorphic maps and applications to universal maps}
\author{Yuta Kusakabe}
\address{Department of Mathematics, Graduate School of Science, Osaka University, Toyonaka, Osaka 560-0043, Japan}
\email{y-kusakabe@cr.math.sci.osaka-u.ac.jp}
\subjclass[2010]{32E10, 32H02, 54C35, 30K20, 47A16}
\keywords{space of holomorphic maps, Stein space, Oka manifold, universal function, composition operator}

\begin{abstract}
We study when there exists a dense holomorphic curve in a space of holomorphic maps from a Stein space.
We first show that for any bounded convex domain $\Omega\Subset\C^n$ and any connected complex manifold $Y$, the space $\cO(\Omega,Y)$ contains a dense holomorphic disc.
Our second result states that $Y$ is an Oka manifold if and only if for any Stein space $X$ there exists a dense entire curve in every path component of $\cO(X,Y)$.

In the second half of this paper, we apply the above results to the theory of universal functions.
It is proved that for any bounded convex domain $\Omega\Subset\C^n$, any fixed-point-free automorphism of $\Omega$ and any connected complex manifold $Y$, there exists a universal map $\Omega\to Y$.
We also characterize Oka manifolds by the existence of universal maps.
\end{abstract}

\maketitle

\section{Introduction}\label{sec:intro}

Let $Y$ be a complex space (throughout this paper always taken to be reduced and second countable).
It is a fundamental problem whether a given complex space $Z$ admits a holomorphic map $Z\to Y$ with dense image.
In this paper, we study the following more general problem for a Stein space $X$:
When does there exist a dense holomorphic map from $Z$ to a subspace $\cS$ of the space $\cO(X,Y)$ of holomorphic maps from $X$ to $Y$ equipped with the compact-open topology?
Here we say a map $f:Z\to\cS$ to be holomorphic if the associated map $Z\times X\to Y,\ (z,x)\mapsto f(z)(x)$ is holomorphic in the usual sense.
Let us introduce the following terminology.

\begin{defn}
A subspace $\cS\subset\cO(X,Y)$ is said to be {\it $Z$-dominated} if there exists a holomorphic map $Z\to\cS$ with dense image.
\end{defn}

Let us review the case when $X$ is a singleton, i.e. $\cO(X,Y)\cong Y$.
Winkelmann \cite{Win} proved that for any irreducible complex space $Z$ admitting a nonconstant bounded holomorphic function, the unit disc $\D$ is $Z$-dominated.
Correspondingly, Chen and Wang \cite{CheWan} proved that for any irreducible complex space $Z$ admitting a nonconstant holomorphic function, the complex line $\C$ is $Z$-dominated.
Since the composition of two dense holomorphic maps has dense image, our problem reduces to the $\D$-dominability or the $\C$-dominability of a subspace $\cS\subset\cO(X,Y)$ in most cases even if $X$ is not a singleton.
For this reason, we study only the $\D$-dominability and the $\C$-dominability.

In his paper \cite{Win}, Winkelmann also proved that any irreducible complex space is $\D$-dominated.
Our first main result is the following partial generalization to spaces of holomorphic maps.
It is proved in Section \ref{sec:ddom}.

\begin{thm}\label{thm:ddom}
Let $\Omega\Subset\C^n$ be a bounded convex domain and $Y$ a connected complex manifold.
Then $\cO(\Omega,Y)$ is $\D$-dominated.
\end{thm}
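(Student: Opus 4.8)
The plan is to realize the required dense holomorphic disc as a single holomorphic map $F\colon\D\times\Omega\to Y$ whose slices $F(z,\cdot)$ fill out $\cO(\Omega,Y)$, and to build $F$ by an inductive Runge-type gluing near $\partial\D$. First I would reduce to an interpolation statement. Since $Y$ is second countable and $\Omega$ is $\sigma$-compact, $\cO(\Omega,Y)$ with the compact-open topology is separable and metrizable; I fix a metric inducing uniform convergence on the exhaustion by the sets $\overline{r\Omega}\Subset\Omega$ ($r<1$; here and below I assume $0\in\Omega$ after a translation, so that $\overline{r\Omega}\subset\Omega$ for $0<r<1$ by convexity). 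Choose a sequence $(g_k)$ that is dense and in which every map recurs for arbitrarily large indices. It then suffices to produce a holomorphic $F\colon\D\times\Omega\to Y$ together with points $z_k\in\D$ such that $F(z_k,\cdot)$ is within $1/k$ of $g_k$ on $\overline{(1-1/k)\Omega}$: a routine diagonal argument shows that the slice set $\{F(z_k,\cdot)\}$ is then dense.

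The key device is a holomorphic \emph{complexified scaling homotopy}. For fixed $k$ the compact set $\{tx:t\in[0,1],\ x\in\overline{(1-1/k)\Omega}\}$ is contained in $\overline{(1-1/k)\Omega}\Subset\Omega$, so there is an open neighborhood $N_k\supset[0,1]$ in $\C$ and a neighborhood $V_k$ of $\overline{(1-1/k)\Omega}$ for which $(s,x)\mapsto g_k(sx)$ is a well-defined holomorphic map into $Y$ on $N_k\times V_k$. This map connects $g_k$ (at $s=1$) to the \emph{constant} map with value $c_k:=g_k(0)$ (at $s=0$) through holomorphic maps, and, crucially, it does so over a complex parameter $s$. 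Convexity, boundedness, and $0\in\Omega$ are exactly what make this device available for an arbitrary target manifold $Y$.

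With this in hand I would assemble $F$ as follows. By Winkelmann's theorem $Y$ is $\D$-dominated, so there is a holomorphic disc with dense image in $Y$; using it as a ``trunk'' of constant slices, I route the disc so that at a sequence of base points $b_k$ with $|b_k|\to1$ the trunk value is close to $c_k$. To each $b_k$ I attach a thin, pairwise disjoint ``finger'' $T_k\subset\D$ reaching a tip $w_k$, and on $T_k\times\Omega$ I set $F(z,x)=g_k(s_k(z)x)$ for a holomorphic $s_k\colon T_k\to N_k$ that is near $0$ on the part of $T_k$ meeting the trunk and satisfies $s_k(w_k)=1$; the scaling device keeps this holomorphic into $Y$ and makes the tip slice equal $g_k$ on $\overline{(1-1/k)\Omega}$. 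The fingers and trunk are then fused by one-variable Runge approximation in the $z$-direction on the polynomially convex sets $\overline{\D}_{r_{k-1}}\cup T_k$, with corrections summably small, so that the maps converge uniformly on compacts to the desired $F$ while the earlier interpolation conditions persist.

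The main obstacle is keeping every map valued in the \emph{general} (non-Oka) manifold $Y$ throughout the gluing, where one cannot freely add holomorphic corrections. Routing through constant maps is precisely what defeats this: at each junction all the relevant values cluster near the single point $c_k$, hence lie in one coordinate chart, so the gluing becomes an ordinary $\C^N$-valued Cartan--Runge splitting of two nearby maps and can be pushed back into $Y$ without leaving it. The remaining care---that the summable chart-corrections produce a limit still mapping into $Y$, and that the fingers accumulate only on $\partial\D$ so that convergence is locally uniform---is then routine.
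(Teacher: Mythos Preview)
Your scaling device $(s,x)\mapsto g_k(sx)$ is exactly right and appears in the paper as well, but the gluing step hides a real gap. When you fuse the trunk (carrying the already-built $F_{k-1}$ on $\overline\D_{r_{k-1}}\times\Omega$) with the $k$-th finger by a ``$\C^N$-valued Cartan--Runge splitting'' in a chart around $c_k$, the additive Cousin decomposition $\delta=\alpha-\beta$ of the discrepancy yields a $\C^N$-valued $\alpha$ on \emph{all} of $\overline\D_{r_{k-1}}\times\Omega$, and the corrected map $F_{k-1}+\alpha$ is only meaningful where $F_{k-1}$ lands in that one chart. By stage $k$, however, $F_{k-1}$ already carries the dense trunk and all earlier fingers, so its image is spread throughout $Y$; there is no global ``$+\alpha$''. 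Doing one-variable Runge in the $z$-direction does not sidestep this either: the object that must be approximated on $\overline\D_{r_{k-1}}\cup T_k$ is an $\cO(\Omega,Y)$-valued (hence $Y$-valued) map, and Mergelyan-type approximation of manifold-valued maps on such sets is precisely the nontrivial ingredient your outline is trying to avoid.

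The paper supplies that ingredient as Proposition~\ref{prop:preapprox} (a special case of Forstneri\v{c}'s Mergelyan theorem for manifold-valued maps): a map holomorphic near $(\overline\D\cup\{2\})\times\overline\Omega$ and merely continuous along the arc $[1,2]\times\{0\}$ can be approximated by maps holomorphic on a neighborhood of the whole union. Only \emph{after} this approximation is your scaling trick invoked, in the form $(z,x)\mapsto g_j(z,\rho_j(z)x)$ with a scalar $\rho_j$ obtained by ordinary Runge, to fatten the result to all of $W_j\times\Omega$; this is Lemma~\ref{lem:approx}. The paper then replaces your inductive finger construction by a Baire category argument: Lemma~\ref{lem:approx} together with a reparametrization lemma (Lemma~\ref{lem:adjust}) shows that $\{f:f(\D)\cap\cU\neq\emptyset\}$ is open and dense in $\cO(\D,\cO(\Omega,Y))$ for every basic open $\cU$, so a generic disc is dense. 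If you wish to keep your direct construction, you will still need something of the strength of Proposition~\ref{prop:preapprox} at each attachment; the single-chart splitting is not enough.
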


If we replace $\Omega$ by a more general pseudoconvex domain or replace $Y$ by a singular space, then this theorem is no longer true (see examples at the end of Section \ref{sec:ddom}).
This is due to the reducibility with respect to the Zariski topologies on the spaces of holomorphic maps (see Definition \ref{defn:zar}).

In Section \ref{sec:cdom}, we study the $\C$-dominability.
A connected Oka manifold (see Definition \ref{defn:oka}) is a typical example of a $\C$-dominated space, but the converse is not true.
For example, for a discrete sequence $D\subset\C^2$ which is unavoidable by nondegenerate holomorphic self-maps of $\C^2$ (cf. \cite[\S4.7]{For1}), the complement $\C^2\setminus D$ is $\C$-dominated for dimensional reasons, but it is not Oka.
By the following second main result, however, a complex manifold $Y$ must be Oka if we further assume that {\it every} space of holomorphic maps to $Y$ is $\C$-dominated.

\begin{thm}\label{thm:cdomoka}
For a connected complex manifold $Y$, the following are equivalent:
\begin{enumerate}
\item $Y$ is Oka.
\item For any Stein space $X$, every path component $\cP\subset\cO(X,Y)$ is $\C$-dominated.
\item For any bounded convex domain $\Omega\Subset\C^n$, the space $\cO(\Omega,Y)$ is $\C$-dominated.
\end{enumerate}
\end{thm}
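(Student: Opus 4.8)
The plan is to establish the cyclic chain $(1)\Rightarrow(2)\Rightarrow(3)\Rightarrow(1)$. For $(1)\Rightarrow(2)$ I would exploit that $\C\times X$ is Stein whenever $X$ is. Fix a path component $\cP\subset\cO(X,Y)$; since $\cO(X,Y)$ is separable metrizable, choose a countable dense sequence $\{f_k\}_{k\ge 1}\subset\cP$. Taking the closed discrete set $\{z_k=k\}\subset\C$, the union $V:=\bigcup_k\{z_k\}\times X$ is a closed complex subvariety of $\C\times X$, and the formula $(z,x)\mapsto f_k(x)$ for $z$ near $z_k$ defines a map that is holomorphic near $V$. Because all $f_k$ lie in the single path component $\cP$, these local data extend to a continuous map $G\colon\C\times X\to Y$ that is holomorphic near $V$: one concatenates continuous paths from $f_k$ to $f_{k+1}$ in $\cP$ along the real axis and fills in the rest using contractibility of $\C$. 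I would then invoke the interpolation form of the Oka principle (valid since $Y$ is Oka and $\C\times X$ is Stein) to deform $G$, rel $V$, to a holomorphic $\Phi\colon\C\times X\to Y$ with $\Phi(z_k,\cdot)=f_k$ for all $k$. The associated entire curve $z\mapsto\Phi(z,\cdot)$ has connected image meeting $\cP$, hence lies in $\cP$, and passes through every $f_k$, so it is dense.

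The implication $(2)\Rightarrow(3)$ is soft. A bounded convex domain $\Omega$ is Stein, so $(2)$ applies with $X=\Omega$; it remains only to see that $\cO(\Omega,Y)$ consists of a single path component. Assuming $0\in\Omega$, the homotopy $f_t:=f(t\,\cdot)$, $t\in[0,1]$, stays in $\cO(\Omega,Y)$ by convexity and joins any $f$ to the constant map with value $f(0)$; since $Y$ is connected, the constant maps are connected among themselves, so $\cO(\Omega,Y)$ is path connected. Its unique path component is therefore $\C$-dominated by $(2)$.

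For $(3)\Rightarrow(1)$, the hard direction, I would reduce being Oka to the Convex Approximation Property: it suffices to show that for every $n$, every compact convex $K\subset\C^n$, and every map holomorphic near $K$ can be approximated uniformly on $K$ by entire maps $\C^n\to Y$; equivalently, that for every bounded convex $\Omega\Subset\C^n$ the restriction $\cO(\C^n,Y)\to\cO(\Omega,Y)$ has dense image. Hypothesis $(3)$ supplies, for each such $\Omega$, a single holomorphic $\Phi\colon\C\times\Omega\to Y$ whose slices $\Phi(z,\cdot)$ are dense in $\cO(\Omega,Y)$, so one can certainly approximate a given datum on $K$ by such a slice.

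The main obstacle is precisely the passage from these slices, defined only on the bounded domain $\Omega$, to maps defined on all of $\C^n$: one cannot manufacture entire maps by composing $\Phi$ with holomorphic maps $\C^n\to\C\times\Omega$, since any holomorphic map into the bounded factor $\Omega$ is constant and would collapse the slice to a single entire curve. I expect this globalization to be the technical heart of the theorem, and the place where density (as opposed to mere dominability, which by the example $\C^2\setminus D$ already fails to imply Oka) and the availability of $(3)$ in \emph{all} dimensions must be used in an essential way. My plan is to verify the approximation by an inductive Runge-type construction over a convex exhaustion $\Omega_1\Subset\Omega_2\Subset\cdots$ of $\C^n$, at each stage using a dense curve on the larger domain together with the extra entire direction it carries to push the current approximation outward while keeping it controlled on the previous domain, and then passing to the limit. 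Arranging the successive extensions to be mutually compatible, so that the limit is a genuine entire map still approximating the datum on $K$, is the delicate point; this bootstrapping of local approximate data into the global Oka property is the step I expect to demand by far the most work.
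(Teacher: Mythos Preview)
Your arguments for $(1)\Rightarrow(2)$ and $(2)\Rightarrow(3)$ are fine and match the paper's (the paper is terser, but uses the same interpolation form of the Oka principle on the subvariety $\N\times X\subset\C\times X$, and the contractibility of $\Omega$ is exactly why $(2)\Rightarrow(3)$ is immediate).

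For $(3)\Rightarrow(1)$ there is a genuine gap, and your diagnosis of the obstacle is correct while the proposed cure is not. Trying to reach CAP by producing entire maps $\C^n\to Y$ via an exhaustion $\Omega_1\Subset\Omega_2\Subset\cdots$ runs into exactly the Liouville obstruction you flag: at each stage you only have a curve into $\cO(\Omega_k,Y)$, and there is no mechanism to make the successive approximants converge on all of $\C^n$---nothing in hypothesis $(3)$ lets you control a map on $\Omega_{k+1}$ once you only know it approximates something on $\Omega_k$. The ``extra entire direction'' you mention is one-dimensional and cannot by itself carry the extension to $\C^n$.

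The paper avoids this entirely by \emph{not} aiming for entire maps. It verifies the special-convex-pair definition of Oka directly, and the step from a smaller convex piece to a larger one is done by a Cartan-pair gluing. The point you are missing is how to use the $\C$-parameter: given a Cartan pair $(D_0,D_1)$ with $L=\overline{D_0\cap D_1}$, one takes an entire curve $f:\C\to\cO(\Omega\times\B^m,Y)$ with $f(0)$ close to the given map (viewed as the core of a spray) and $f(1)$ close to a dominating spray $\pi$ on $D_1$. Since $K_l$ and $\widehat L$ are disjoint with polynomially convex union, Oka--Weil gives an entire function $\varphi\in\cO(\C^{n+m})$ that is near $0$ on $K_l\times\B^m$ and near $1$ on $L\times\B^m$. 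The composition $g=\ev(f\circ\varphi,\id)$ is then a local holomorphic spray on $\bar D_0$ whose core approximates the datum on $K_l$ and which is close to $\pi$ on $L$; Forstneri\v{c}'s splitting lemma for Cartan pairs glues $g$ and $\pi$ across $D_0\cup D_1$. Thus the entire direction is consumed by precomposition with an entire \emph{scalar} function of the base variables, not by trying to map $\C^n$ into a bounded domain. This is the idea your plan lacks.
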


In Section \ref{sec:univ}, we give applications of the above results to the theory of universal functions.
In 1929, Birkhoff constructed for each $a\in\C\setminus\{0\}$ an entire function $F(z)\in\cO(\C)$ such that $\{F(z+ja)\}_{j\in\N}$ is dense in $\cO(\C)$ (cf. \cite{GroPer}).
Such a function is called a {\it universal function}.
Universal functions $X\to\C$ on more general spaces $X$ have been studied intensively.
In this paper, we make the first attempt to develop the theory of universal maps $X\to Y$ to more general targets $Y$.

\begin{defn}
Let $\tau\in\Aut X$ be an automorphism and assume that $\cS\subset\cO(X,Y)$ is a {\it $\tau^*$-invariant} subspace, i.e. $\tau^*\cS\subset\cS$, where $\tau^*:\cO(X,Y)\to\cO(X,Y)$ is the precomposition map.
A holomorphic map $F\in\cS$ is called an {\it $\cS$-universal map for $\tau$} if $\{F\circ\tau^j\}_{j\in\N}$ is dense in $\cS$.
\end{defn}



More recently, Zaj\c{a}c \cite{Zaj} characterized automorphisms admitting universal functions, or universal maps to $\C$.
Andrist and Wold \cite{AndWol} call such automorphisms {\it generalized translations} (see Definition \ref{defn:gentransl}).
It is natural to consider universal maps for generalized translations.
For a bounded convex domain, a generalized translation is just a fixed-point-free automorphism (see Proposition \ref{prop:fixedpts}).
The following is proved as an application of Theorem \ref{thm:ddom}.

\begin{thm}\label{thm:univcvx}
Let $\Omega\Subset\C^n$ be a bounded convex domain, $\tau\in\Aut\Omega$ a generalized translation and $Y$ a connected complex manifold.
Then there exists an $\cO(\Omega,Y)$-universal map for $\tau$.
\end{thm}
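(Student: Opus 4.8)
The plan is to deduce Theorem~\ref{thm:univcvx} from Theorem~\ref{thm:ddom} by means of a Baire-category argument, the standard technique in universality theory. Since $\Omega$ is bounded and $Y$ is metrizable (being second countable), the space $\cO(\Omega,Y)$ with the compact-open topology is a separable completely metrizable space, hence a Baire space; fix a countable dense subset $\{g_k\}_{k\in\N}$ and a countable exhaustion $\{K_m\}_{m\in\N}$ of $\Omega$ by compact sets. The set of $\cO(\Omega,Y)$-universal maps for $\tau$ is exactly
\begin{equation*}
\bigcap_{k,m}\bigcup_{j\in\N}\left\{F\in\cO(\Omega,Y):d(F\circ\tau^j,g_k)<\tfrac1m\text{ on }K_m\right\},
\end{equation*}
where $d$ is a metric on $Y$ inducing its topology. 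Each of the inner unions is open, so it suffices to show that each is dense; then the Baire property yields a dense $G_\delta$ of universal maps, in particular a nonempty one.

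To prove density of a typical such open set, I would fix $g\in\cO(\Omega,Y)$, a compact $K\Subset\Omega$, an $\varepsilon>0$, and an arbitrary target $h\in\cO(\Omega,Y)$ together with a compact $L\Subset\Omega$ and $\delta>0$; the goal is to find $F$ that is $\delta$-close to $h$ on $L$ and, for some $j$, has $F\circ\tau^j$ that is $\varepsilon$-close to $g$ on $K$. Because $\tau$ is a generalized translation, the orbit of any compact set eventually escapes every compact set: concretely, there is $j$ so large that $\tau^j(K)\cap L=\varnothing$, and in fact one can arrange $\tau^j(K)$ and $L$ to be contained in disjoint holomorphically convex pieces of $\Omega$. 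This is the precise sense in which $\tau$ behaves like the Birkhoff translation, and it is here that the definition of generalized translation (Definition~\ref{defn:gentransl}) and Proposition~\ref{prop:fixedpts} are used. The separation allows me to define a holomorphic map on $L\cup\tau^j(K)$ that equals $h$ near $L$ and equals $g\circ\tau^{-j}$ near $\tau^j(K)$, and then to approximate it by a global $F\in\cO(\Omega,Y)$.

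The crucial input for this approximation is Theorem~\ref{thm:ddom}, which guarantees that $\cO(\Omega,Y)$ is $\D$-dominated; equivalently, one has enough holomorphic maps $\Omega\to Y$ to realize prescribed jets or boundary behaviour on disjoint convex pieces. More precisely, I expect to need a Runge-type or Mergelyan-type approximation on $\Omega$ with values in the manifold $Y$, patching the two prescribed pieces into a single global holomorphic map; the $\D$-dominability furnishes the holomorphic discs through which such patching can be performed even when $Y$ is an arbitrary connected manifold rather than $\C$. I would therefore reduce the patching to the existence of a suitable holomorphic map agreeing to high order with the two local models and transport this through the dense disc provided by Theorem~\ref{thm:ddom}.

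The main obstacle will be the manifold target: when $Y=\C$ one simply adds a holomorphic correction term supported away from $L$, but for a nonlinear $Y$ there is no addition, so the two local pieces $h|_L$ and $(g\circ\tau^{-j})|_{\tau^j(K)}$ cannot be merged by linear interpolation. Overcoming this is exactly where Theorem~\ref{thm:ddom} must do the heavy lifting—the dense holomorphic disc lets one deform within $\cO(\Omega,Y)$ to hit the prescribed values on two disjoint compacta simultaneously. I would need to verify carefully that the disjointness of $\tau^j(K)$ and $L$ can be upgraded to a separation by a convex (or at least Runge) decomposition of $\Omega$, so that the target values on the two pieces are genuinely independent; this compatibility between the dynamics of $\tau$ and the approximation theory is the technical heart of the argument.
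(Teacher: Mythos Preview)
Your Baire-category framework is exactly right and matches the paper's strategy, but the crucial step---patching $h$ on $L$ and $g\circ\tau^{-j}$ on $\tau^j(K)$ into a single global $F\in\cO(\Omega,Y)$---is a genuine gap. You invoke a ``Runge-type or Mergelyan-type approximation on $\Omega$ with values in the manifold $Y$'', but no such theorem holds for an arbitrary connected manifold $Y$: Mergelyan approximation with manifold target is essentially an Oka-type condition on $Y$, and the whole point of Theorem~\ref{thm:univcvx} is that it requires no such hypothesis. Your suggestion that the dense disc $f:\D\to\cO(\Omega,Y)$ from Theorem~\ref{thm:ddom} lets one ``hit the prescribed values on two disjoint compacta simultaneously'' is not substantiated either: density of $f$ lets you approximate any \emph{single} map, but $f(t)|_L$ and $f(t)|_{\tau^j(K)}$ are coupled as $t$ varies in $\D$, so you cannot independently prescribe them.

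The paper resolves this by an indirect factorization you do not mention. The dense disc $f:\D\to\cO(\Omega,Y)$ gives an associated map $\widehat f:\D\times\Omega\to Y$, and postcomposition $\widehat f_*:\cO(\Omega,\D\times\Omega)\to\cO(\Omega,Y)$ has dense image and intertwines $\tau^*$. Thus it suffices to produce a universal map for $\tau$ in $\cO(\Omega,\D\times\Omega)$---i.e.\ with \emph{bounded convex} target. In that setting the Birkhoff transitivity argument goes through, but not via Mergelyan: one uses tautness of the target to extract limits $\widetilde u,\widetilde v$ of $u\circ\tau^{j_k}$ and $v\circ\tau^{-j_k}$, applies Theorem~\ref{thm:ddom} to $\cO(\Omega^2,\Omega)$ (not to $\cO(\Omega,Y)$) to get a closed disc through the two projections $\pi_1,\pi_2$, and uses $c$-finite compactness of $\Omega$ to manufacture holomorphic maps $\varphi_k:\Omega\to\D$ with $\varphi_k\to -1$ and $\varphi_k\circ\tau^{j_k}\to 1$; evaluating the disc along $\varphi_k$ against the pair $(u,v\circ\tau^{-j_k})$ produces the required approximants. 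None of these ingredients---tautness of the target, the product trick in $\cO(\Omega^2,\Omega)$, or the $c$-finite compactness input---appear in your outline, and they are precisely what replaces the unavailable Mergelyan step.
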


For a universal map from a more general Stein space, we have the following result which is an application of Theorem \ref{thm:cdomoka}.

\begin{thm}\label{thm:univoka}
For a connected complex manifold $Y$, the following are equivalent:
\begin{enumerate}
\item $Y$ is Oka.
\item For any Stein space $X$, any generalized translation $\tau\in\Aut X$ and any $\tau^*$-invariant path component $\cP\subset\cO(X,Y)$, there exists a $\cP$-universal map.
\item For any convex domain $\Omega\subset\C^n$ and any generalized translation $\tau\in\Aut\Omega$, there exists an $\cO(\Omega,Y)$-universal map.
\end{enumerate}
\end{thm}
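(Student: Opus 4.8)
The plan is to prove the cyclic chain of implications $(1)\Rightarrow(2)\Rightarrow(3)\Rightarrow(1)$, using a Birkhoff-type transitivity argument to pass from the Oka property to universality, and using Theorem \ref{thm:cdomoka} as the bridge back from universality to the Oka property. Throughout, the relevant spaces are Polish: for $X$ Stein (hence locally compact and second countable) and $Y$ a connected manifold, $\cO(X,Y)$ is a closed subspace of the Polish space $\cC(X,Y)$ in the compact-open topology, and any path component $\cP$, being open and closed, is again Polish and (assuming $\dim Y\ge 1$; the case $\dim Y=0$ is trivial) has no isolated points. On such a space the Birkhoff transitivity theorem applies: if the continuous self-map $\tau^*\colon\cP\to\cP$ is topologically transitive, then the set of $F\in\cP$ with dense orbit $\{F\circ\tau^j\}_{j\in\N}$ is a dense $G_\delta$, so a $\cP$-universal map exists.

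For $(1)\Rightarrow(2)$ I would establish topological transitivity of $\tau^*$ on each $\tau^*$-invariant path component $\cP$ directly from the Oka property. Given two basic open sets determined by maps $f_0,g_0\in\cP$, an $\cO(X)$-convex compact $K$ (these suffice as a neighborhood base) and $\varepsilon>0$, the defining runaway property of a generalized translation furnishes an index $j$ with $\tau^j(K)\cap K=\emptyset$ and $L:=K\cup\tau^j(K)$ being $\cO(X)$-convex. On a neighborhood of $L$ define $\varphi$ to equal $f_0$ near $K$ and $g_0\circ\tau^{-j}$ near $\tau^j(K)$. Since $\cP$ is $\tau^*$-invariant and $\tau^*$ is a homeomorphism permuting the open path components, $\tau^*\cP=\cP$, whence $f_0\circ\tau^{-j}$ and $g_0\circ\tau^{-j}$ lie in $\cP$ and are homotopic to $f_0$; cutting off along a collar of $\tau^j(K)$ therefore extends $\varphi$ to a global continuous map in $\cP$. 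The Oka property of $Y$ (approximation of maps from a Stein source on an $\cO(X)$-convex compact, in a prescribed homotopy class) then yields a global holomorphic $F\in\cP$ with $\sup_L d(F,\varphi)<\varepsilon$, so that $F$ lies in the first basic set and $F\circ\tau^j$ in the second; this is exactly transitivity. (Equivalently one may invoke Theorem \ref{thm:cdomoka} to know $\cP$ is $\C$-dominated and feed that into the same scheme.) I expect this step to be the main obstacle, the delicate point being the homotopy bookkeeping required to keep the approximating map inside the prescribed path component.

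The implication $(2)\Rightarrow(3)$ is a specialization. A convex domain $\Omega\subset\C^n$ is Stein, and $\cO(\Omega,Y)$ is $\tau^*$-invariant for every $\tau\in\Aut\Omega$. Moreover $\cO(\Omega,Y)$ is path-connected: fixing $x_0\in\Omega$, the homotopy $f_t(x):=f(x_0+t(x-x_0))$, $t\in[0,1]$, stays in $\cO(\Omega,Y)$ by convexity and joins any $f$ to the constant map $f(x_0)$, while all constants are joined through a path in the connected manifold $Y$. Hence $\cO(\Omega,Y)$ is itself a single $\tau^*$-invariant path component, and applying $(2)$ produces an $\cO(\Omega,Y)$-universal map.

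For $(3)\Rightarrow(1)$ I would detect $\C$-dominability of $\cO(\Omega_0,Y)$, for an arbitrary bounded convex $\Omega_0\Subset\C^n$, by passing to the \emph{unbounded} convex domain $\Omega_0\times\C\subset\C^{n+1}$. The coordinate translation $\tau(w,z)=(w,z+1)$ is a generalized translation of $\Omega_0\times\C$ (the standard runaway example: any $\cO(\Omega_0\times\C)$-convex compact is pushed off itself in the $z$-direction, with $\cO$-convex union), so $(3)$ supplies an $\cO(\Omega_0\times\C,Y)$-universal map $F$, i.e.\ $\{F\circ\tau^j\}_{j\in\N}$ is dense. Crucially this translation extends to a complex-time flow $\tau^\zeta(w,z)=(w,z+\zeta)$, $\zeta\in\C$, of automorphisms of $\Omega_0\times\C$, so $G(\zeta):=F\circ\tau^\zeta$ defines a holomorphic curve $G\colon\C\to\cO(\Omega_0\times\C,Y)$ whose image contains the dense orbit $\{F\circ\tau^j\}$. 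The restriction to the slice $\{z=0\}$, namely $\rho\colon\cO(\Omega_0\times\C,Y)\to\cO(\Omega_0,Y)$, $\rho(h)=h(\,\cdot\,,0)$, is surjective (the constant-in-$z$ extension $g\mapsto[(w,z)\mapsto g(w)]$ is a section), hence carries dense subsets to dense subsets. Therefore $\rho\circ G\colon\C\to\cO(\Omega_0,Y)$, $\zeta\mapsto F(\,\cdot\,,\zeta)$, is a holomorphic curve whose image contains the dense set $\rho(\{F\circ\tau^j\})=\{w\mapsto F(w,j)\}_{j\in\N}$, proving that $\cO(\Omega_0,Y)$ is $\C$-dominated. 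As $\Omega_0$ was an arbitrary bounded convex domain, Theorem \ref{thm:cdomoka} yields that $Y$ is Oka, closing the cycle.
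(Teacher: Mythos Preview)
Your proof is correct, and the argument for $(3)\Rightarrow(1)$ is essentially the paper's: both pass to the unbounded convex domain $\C\times\Omega_0$ (the paper writes the factors in the other order), use a translation in the $\C$-factor as the generalized translation, and then observe that the complex-time flow $\zeta\mapsto F\circ\tau^\zeta$ is an entire curve containing the dense orbit, so its slice restriction gives a dense entire curve in $\cO(\Omega_0,Y)$; Theorem~\ref{thm:cdomoka} then closes the cycle.

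The genuine difference is in $(1)\Rightarrow(2)$. The paper does not argue transitivity directly; instead it first invokes Theorem~\ref{thm:cdomoka} to obtain $\C$-dominability of each path component $\cP$, and then applies the separately proved Theorem~\ref{thm:cdomuniv}, which converts a dense entire curve $f:\C\to\cP$ into a $\cP$-universal map via a Baire argument in $\cO(X)$ (one looks for $\varphi\in\cO(X)$ such that $\ev(f\circ\varphi,\id_X)$ is universal, using Oka--Weil on $K\cup\tau^k(K)$). Your route bypasses this machinery: you apply the Oka principle (Theorem~\ref{thm:okaprinciple}) directly to the glued map $\varphi$ that equals $f_0$ near $K$ and $g_0\circ\tau^{-j}$ near $\tau^j(K)$, extended continuously in the correct homotopy class by interpolating along a homotopy $H$ between $f_0$ and $g_0\circ\tau^{-j}$ using a cutoff $\chi:X\to[0,1]$. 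This is more self-contained and makes the role of the generalized-translation hypothesis very transparent, at the cost of the homotopy bookkeeping you flag (which, as you note, goes through because $\tau^*$ permutes path components bijectively, so $\tau^*\cP\subset\cP$ forces $\tau^*\cP=\cP$ and hence $g_0\circ\tau^{-j}\in\cP$). The paper's route, on the other hand, isolates Theorem~\ref{thm:cdomuniv} as a reusable statement valid for arbitrary targets $Y$ under the sole hypothesis of $\C$-dominability, which is what it later exploits in Corollary~\ref{cor:cdomuniv}.
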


We also have an application of universal maps.
Forstneri\v{c} and Winkelmann \cite{ForWin} proved that the set of dense holomorphic discs in a connected complex manifold is dense in the space of holomorphic discs.
By Theorem \ref{thm:univcvx}, we may easily obtain the following generalization to several variables because a universal map has dense image and a generic map is universal if there exists some universal map.

\begin{cor}
For any connected complex manifold $Y$, the set of dense holomorphic balls $\B^n\to Y$ is dense in $\cO(\B^n,Y)$.
It holds also for a polydisc $\D^n$.
\end{cor}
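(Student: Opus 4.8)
The plan is to deduce the statement from Theorem \ref{thm:univcvx} applied to the bounded convex domain $\Omega=\B^n$, together with the two general facts indicated right after the statement: an $\cO(\B^n,Y)$-universal map automatically has dense image, and the universal maps form a dense subset as soon as one of them exists. First I would fix a fixed-point-free automorphism $\tau\in\Aut\B^n$ (for instance a hyperbolic automorphism, whose fixed points lie on the boundary $\partial\B^n$); by Proposition \ref{prop:fixedpts} such a $\tau$ is a generalized translation, so Theorem \ref{thm:univcvx} yields an $\cO(\B^n,Y)$-universal map $F_0$ for $\tau$, i.e. $\{F_0\circ\tau^j\}_{j\in\N}$ is dense in $\cO(\B^n,Y)$. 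We may assume $\dim Y\ge1$, the case $\dim Y=0$ being trivial.

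Next I would check that \emph{every} $\cO(\B^n,Y)$-universal map $F$ for $\tau$ has dense image. Given $y\in Y$, the constant map $c_y\colon\B^n\to Y$ lies in $\cO(\B^n,Y)$, so by universality some $F\circ\tau^j$ approximates $c_y$ uniformly on a fixed compact set $K\subset\B^n$ with respect to a distance function on $Y$. Evaluating at a point of $K$ and letting the approximation improve shows $y\in\overline{F(\tau^j(\B^n))}\subset\overline{F(\B^n)}$, where the last inclusion uses $\tau^j(\B^n)\subset\B^n$. As $y$ is arbitrary this gives $\overline{F(\B^n)}=Y$. Hence the set of universal maps is contained in the set of dense holomorphic balls, and it suffices to prove the former is dense in $\cO(\B^n,Y)$.

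For this I would invoke the standard Birkhoff transitivity mechanism. Writing $T=\tau^*$ for the continuous precomposition operator and fixing a countable base $\{U_k\}$ of $\cO(\B^n,Y)$, the universal set is $\cU=\bigcap_k\bigcup_j T^{-j}(U_k)$, which is a $G_\delta$. To get density I would use that $\cO(\B^n,Y)$ has no isolated points when $\dim Y\ge1$: a nonconstant $f$ is approximated by the distinct maps $f\circ\phi_t$ for automorphisms $\phi_t\to\id$, while a constant map is approximated by small nonconstant maps built in a chart via the (bounded) coordinate functions on $\B^n$. Consequently the dense orbit $\{F_0\circ\tau^j\}$ stays dense after deleting finitely many terms, which shows that each $F_0\circ\tau^j$ is itself universal; thus $\cU$ contains this dense orbit and is therefore a dense (indeed residual) subset. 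Combined with the previous paragraph, the dense holomorphic balls are dense in $\cO(\B^n,Y)$. The polydisc assertion is proved identically, taking $\Omega=\D^n$ and $\tau$ a product of fixed-point-free disc automorphisms.

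Since Theorem \ref{thm:univcvx} carries all the analytic weight, no step here is genuinely hard; the only point requiring care is the genericity argument of the third paragraph, and within it the verification that $\cO(\B^n,Y)$ has no isolated points, which is what upgrades the existence of one universal map to density of the universal (hence dense-image) maps.
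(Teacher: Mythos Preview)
Your proposal is correct and follows precisely the approach the paper indicates (the paper does not give a detailed proof, only the remark that the corollary follows from Theorem~\ref{thm:univcvx} since a universal map has dense image and a generic map is universal once one universal map exists). You have simply spelled out these two standard steps, including the no-isolated-points check that underlies the density of hypercyclic vectors; this is exactly what the paper leaves implicit by citing the Birkhoff transitivity framework of \cite{GroPer}.
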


Finally, let us give a few remarks and notations which will be used in this paper.

\begin{rem}\label{rem:intro}
Let $X,Y$ and $Z$ be complex spaces.
\\
(1) The space $\cO(X,Y)$ of holomorphic maps is second countable and completely metrizable.
In particular, it is a separable Baire space.
We omit the proof but it follows from the assumption that $X$ and $Y$ are second countable.
\\
(2) We denote by $\cO(X,\cO(Y,Z))$ the space of holomorphic maps $X\to\cO(Y,Z)$.
This space is naturally homeomorphic to $\cO(X\times Y,Z)$ by definition.
\\
(3) For a holomorphic map $f:X\to Y$, we denote by $f^*:\cO(Y,Z)\to\cO(X,Z)$ and $f_*:\cO(Z,X)\to\cO(Z,Y)$ the precomposition map and the postcomposition map, respectively.
We denote by ${\ev}:\cO(X,Y)\times X\to Y$ the evaluation map $(g,x)\mapsto g(x)$.
Note that these maps define the postcomposition maps $\cO(S,\cO(Y,Z))\to\cO(S,\cO(X,Z)),\ \cO(S,\cO(Z,X))\to\cO(S,\cO(Z,Y))$ and $\cO(S,\cO(X,Y))\times\cO(S,X)\to\cO(S,Y)$ between spaces of holomorphic maps for any complex space $S$.
\end{rem}

\section{The $\D$-dominability: Proof of Theorem \ref{thm:ddom}}\label{sec:ddom}

Our proof of Theorem \ref{thm:ddom} is based on the ideas of Forstneri\v{c} and Winkelmann \cite{ForWin}.
We shall need two lemmas.

\begin{lem}\label{lem:approx}
Assume that $\Omega\Subset\C^n$ is a bounded convex domain and $Y$ is a connected complex manifold.
Let $f:\D\to\cO(\Omega,Y)$ be a holomorphic disc and $\cU\subset\cO(\Omega,Y)$ a nonempty open subset.
Then there exists a sequence $\{f_j:W_j\to\cO(\Omega,Y)\}_{j\in\N}$ of holomorphic maps from open neighborhoods of $\overline\D\cup[1,2]\subset\C$ such that
\begin{enumerate}
\item $f_j|_\D\to f$ as $j\to\infty$, and
\item $f_j(2)\in\cU$ for all $j\in\N$.
\end{enumerate}
\end{lem}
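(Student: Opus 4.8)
The plan is to realize each $f_j$ as a holomorphic approximant, on a neighborhood of $S:=\overline{\D}\cup[1,2]$, of a single \emph{reference map} $\phi\colon S\to\cO(\Omega,Y)$ which is holomorphic on $\D$, agrees with $f$ there, and satisfies $\phi(2)\in\cU$. To build $\phi$, I would first fix $g\in\cU$ and connect $f$ to $g$ along the whisker using the convexity of $\Omega$. After translating so that $0\in\Omega$, the inward dilations $x\mapsto h(sx)$ ($s\in[0,1]$) join any $h\in\cO(\Omega,Y)$ to the constant map $x\mapsto h(0)$, and since $Y$ is connected the constant maps are joined to one another inside $\cO(\Omega,Y)$; concatenating gives a smooth path $\gamma\colon[1,2]\to\cO(\Omega,Y)$ with $\gamma(1)=f(1)$ and $\gamma(2)=g$. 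Because $f$ itself is defined only on the open disc, I would first replace it by the shrunk disc $f_r(\zeta):=f(r\zeta)$ with $r<1$, which is holomorphic on a neighborhood of $\overline{\D}$ and tends to $f$ in the compact-open topology as $r\to1$; gluing $f_r|_{\overline{\D}}$ to the corresponding path $\gamma_r$ (starting at $f_r(1)=f(r)$) along the point $1$ produces the reference map $\phi_r$.

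Next I would approximate $\phi_r$ by maps holomorphic on neighborhoods of $S$, following the Forstneri\v{c}--Winkelmann technique. Passing to the associated $Y$-valued map $\Phi_r(\zeta,x):=\phi_r(\zeta)(x)$, this is holomorphic in $(\zeta,x)$ on a neighborhood of $\overline{\D}\times\Omega$ and, over the totally real segment $[1,2]$, holomorphic in $x\in\Omega$ and smooth in $\zeta$. Exhausting $\Omega$ by relatively compact convex subdomains $\Omega'\Subset\Omega$, I would apply a Mergelyan-type approximation theorem for manifold-valued maps on the set $(\overline{\D}\cup[1,2])\times\overline{\Omega'}$ (whose disc part is a Stein compact and whose parameter segment is totally real) to obtain maps holomorphic on neighborhoods in $\C^{n+1}$ that approximate $\Phi_r$ uniformly there. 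Reading these back as holomorphic maps $W\to\cO(\Omega,Y)$ (granting the domain bookkeeping discussed below) and letting $r\to1$, $\Omega'\uparrow\Omega$, and the approximation error tend to $0$ along a diagonal sequence yields $f_j$ with $f_j|_\D\to f$; since $\phi_r(2)=g\in\cU$ and $\cU$ is open, $f_j(2)\in\cU$ for all large $j$ (hence, after reindexing, for all $j$).

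I expect the approximation step to be the main obstacle. Because $Y$ is only assumed to be a connected complex manifold, and in particular is \emph{not} assumed to be Oka, no Oka principle or spray is available, and the approximation must be carried out for the one fixed map $\Phi_r$; what is genuinely needed is the (parametrized) Mergelyan property, namely that a map holomorphic in the $\Omega$-directions and across the disc, and merely smooth along the totally real whisker, can be approximated by honestly holomorphic maps across the junction at $\zeta=1$ where the boundary of the disc meets the segment. The other delicate point is bookkeeping in the domain: the reference map is globally defined on $\Omega$ but is not continuous up to $\partial\Omega$, so the approximation is only available over the relatively compact subdomains $\Omega'$, and recovering genuine elements of $\cO(\Omega,Y)$ together with the convergence $f_j|_\D\to f$ is precisely what forces the passage to a limiting sequence rather than a single map.
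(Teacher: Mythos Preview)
Your overall plan---build a reference map on $\overline{\D}\cup[1,2]$ that equals (a shrunk) $f$ on the disc and lands in $\cU$ at $2$, then approximate it holomorphically---is exactly the paper's strategy, and your use of the dilations $x\mapsto h(sx)$ is the right idea. But there is a genuine gap in the approximation step, and your ``domain bookkeeping'' remark does not close it.

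The problem is twofold. First, your proposed Mergelyan step is on $(\overline{\D}\cup[1,2])\times\overline{\Omega'}$, whose handle $[1,2]\times\overline{\Omega'}$ is \emph{not} totally real in $\C^{n+1}$: it carries all the complex directions of $\Omega'$. The manifold-valued Mergelyan theorems available here (e.g.\ the special case of \cite[Theorem~3.7.2]{For1} used in the paper as Proposition~\ref{prop:preapprox}) require a genuinely totally real attachment; a ``parametrized Mergelyan'' in your sense, with $Y$ an arbitrary manifold, is not a standard off-the-shelf statement. Second, even granting such an approximation, each approximant would be holomorphic only on a neighborhood of $\overline{\Omega'}$, hence would give an element of $\cO(\Omega',Y)$, not of $\cO(\Omega,Y)$. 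Letting $\Omega'\uparrow\Omega$ along a diagonal does not manufacture maps defined on all of $\Omega$; every single $f_j$ in the conclusion must already lie in $\cO(W_j,\cO(\Omega,Y))$.

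The paper solves both issues with one device you are missing. After shrinking in \emph{both} the $\zeta$- and $x$-directions so that $\widehat f$ and a chosen $u\in\cU$ extend past $\overline\Omega$, it connects only the \emph{points} $\widehat f(1,0)$ and $u(0)$ in $Y$, so the whisker is the one-dimensional totally real arc $I=[1,2]\times\{0\}$. Proposition~\ref{prop:preapprox} then yields holomorphic $g_j$ on neighborhoods of $\bigl((\overline{\D}\cup\{2\})\times\overline\Omega\bigr)\cup I$, with $g_j(2,\cdot)|_\Omega=u$ exactly. Of course $g_j(z,\cdot)$ is defined only near $x=0$ for $z$ in the middle of $[1,2]$; the key trick is to build, again via one-variable Mergelyan, a holomorphic function $\rho_j$ on a neighborhood $W_j$ of $\overline{\D}\cup[1,2]$ which is close to $1$ on $\D$, equals $1$ at $2$, and is tiny on the middle of the segment, and then to set $\widehat f_j(z,x)=g_j(z,\rho_j(z)x)$. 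This is now defined on all of $W_j\times\Omega$, restricts near $\D$ to something close to $\widehat f$, and at $z=2$ equals $u\in\cU$. That $\rho_j$-rescaling is the missing ingredient in your argument.
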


To prove this lemma, we need the following special case of Theorem 3.7.2 in \cite{For1}.
Note that for any compact polynomially convex subset $0\in L\subset\C^n$ the subset $((\overline\D\cup\{2\})\times L)\cup([1,2]\times\{0\})\subset\C^{n+1}$ is also polynomially convex.

\begin{prop}\label{prop:preapprox}
Assume that $0\in\Omega\Subset\C^n$ is a bounded convex domain and $Y$ is a complex manifold.
Set $K=(\overline\D\cup\{2\})\times\overline\Omega$ and $I=[1,2]\times\{0\}$.
Let $W\subset\C^{n+1}$ be an open neighborhood of $K$ and $f:W\cup I\to Y$ a continuous map which is holomorphic on $W$.
Then there exists a sequence $\{f_j:W_j\to Y\}_{j\in\N}$ of holomorphic maps from open neighborhoods of $K\cup I\subset\C^{n+1}$ such that
\begin{enumerate}
\item $f_j|_{K\cup I}\to f|_{K\cup I}$ as $j\to\infty$, and
\item $f_j(2,\cdot)|_\Omega=f(2,\cdot)|_\Omega$ for all $j\in\N$.
\end{enumerate}
\end{prop}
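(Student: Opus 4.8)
The plan is to obtain the statement directly from the Mergelyan-type approximation theorem with interpolation for maps into complex manifolds, \cite[Theorem 3.7.2]{For1}, applied on the Stein manifold $\C^{n+1}$. The first step is to present $S:=K\cup I$ as an admissible set, i.e. the union of a compact holomorphically convex ``core'' and a totally real piece attached to it. The core is $K=(\overline\D\cup\{2\})\times\overline\Omega$: the factor $\overline\Omega$ is compact and convex, hence polynomially convex, while $\overline\D$ and $\{2\}$ are disjoint polynomially convex subsets of $\C$ separated by the coordinate $z_0$, so $K$ is polynomially convex. The attached piece is the real-analytic, hence totally real, arc $I=[1,2]\times\{0\}$, which meets $K$ only at its endpoints $(1,0)$ and $(2,0)$. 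The polynomial convexity remark preceding the statement, applied with $L=\overline\Omega$, shows that $S$ is polynomially convex, and a polynomially convex compact set automatically has a basis of Stein (indeed polynomially convex) neighborhoods; hence $S$ is admissible.

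The given $f$ fits the regularity required by the theorem: it is holomorphic on the neighborhood $W$ of the core $K$ and merely continuous on the totally real arc $I$, which is enough for Mergelyan approximation along $I$. For the interpolation condition I would take the interpolation subvariety to be the complex hyperplane $\Sigma:=\{z_0=2\}\cong\C^n$. The slice $\{2\}\times\overline\Omega=S\cap\Sigma$ is contained in $K\subset W$, so $f$ is holomorphic near it, and $I$ meets $\Sigma$ only at the single point $(2,0)$. Invoking \cite[Theorem 3.7.2]{For1} with approximation on $S$ and interpolation along $\Sigma$ then yields holomorphic maps $f_j:W_j\to Y$ on shrinking neighborhoods of $S$ with $f_j\to f$ uniformly on $S$, which is condition (1), and with $f_j=f$ on $\Sigma$ wherever $f$ is defined there, in particular $f_j(2,\cdot)|_\Omega=f(2,\cdot)|_\Omega$, which is condition (2).

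It is worth stressing why no Oka hypothesis on $Y$ is needed. Since $f$ is already holomorphic on the full neighborhood $W$ of the core $K$, there is no genuine approximation of a manifold-valued map to carry out over the holomorphically convex part; the only work is to extend $f$ holomorphically across the totally real arc $I\setminus K=(1,2)\times\{0\}$. Over a totally real set this is done by covering the image under $f$ by finitely many charts of $Y$, approximating the resulting $\C^m$-valued continuous data, and patching, so an arbitrary complex manifold target is permissible.

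The main obstacle I anticipate is the compatibility at the junction $(2,0)=I\cap\Sigma$: the holomorphic correction used to cross $I$ must be arranged to leave $f$ unchanged on the slice $\{2\}\times\Omega$, i.e. to vanish on $\Sigma$ near $(2,0)$, while still approximating the continuous boundary data along $I$. The other point requiring care is the verification that $S$ genuinely meets the admissibility hypotheses of \cite[Theorem 3.7.2]{For1} — that the totally real arc is attached to the polynomially convex core so that the union retains a Stein neighborhood basis — and this is exactly what the polynomial convexity remark secures.
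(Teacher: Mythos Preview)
Your proposal is correct and follows exactly the paper's approach: the paper does not give a separate proof of this proposition but simply states it as a special case of \cite[Theorem 3.7.2]{For1}, after recording the polynomial convexity of $((\overline\D\cup\{2\})\times L)\cup([1,2]\times\{0\})$ for polynomially convex $L\ni 0$. Your write-up supplies precisely the details (admissibility of $S$, interpolation along the hyperplane $\Sigma=\{z_0=2\}$, and the absence of any Oka hypothesis on $Y$) that make the citation go through, so there is nothing to add.
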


For $a\in\C$ and $S\subset \C^n$, we write $aS=\{az:z\in S\}\subset\C^n$.

\begin{proof}[Proof of Lemma \ref{lem:approx}]
We may assume that $\Omega$ contains the origin $0\in\C^n$.
Let us consider the associated holomorphic map $\widehat f:\D\times\Omega\to Y,\ (z,x)\mapsto f(z)(x)$.
For each $r>1$, the map $(z,x)\mapsto\widehat f(r^{-1}z,r^{-1}x)$ is holomorphic on the neighborhood $r(\D\times\Omega)$ of $\overline\D\times\overline\Omega$ and converges to $\widehat f$ on $\D\times\Omega$ as $r\searrow1$.
Therefore, we may assume that $\widehat f$ is holomorphic on a neighborhood of $\overline\D\times\overline\Omega$ from the beginning.
Take a holomorphic map $u\in\cU$.
For the same reason, we may assume that $u$ is holomorphic on a neighborhood of $\overline\Omega$.

Since $Y$ is connected, we may easily find an open neighborhood $V\subset\C^{n+1}$ of $(\overline\D\cup\{2\})\times\overline\Omega$ and a continuous map $g:V\cup([1,2]\times\{0\})\to Y$ such that $g|_V$ is holomorphic, $g|_{\D\times\Omega}=\widehat f$ and $g(2,\cdot)|_\Omega=u$.
By Proposition \ref{prop:preapprox}, there exists a sequence $\{g_j:V_j\to Y\}_{j\in\N}$ of holomorphic maps from open neighborhoods of $((\overline\D\cup\{2\})\times\overline\Omega)\cup([1,2]\times\{0\})\subset\C^{n+1}$ such that
\begin{enumerate}
\renewcommand{\labelenumi}{(\alph{enumi})}
\item $g_j|_{\D\times\Omega}\to\widehat f$ as $j\to\infty$, and
\item $g_j(2,\cdot)|_\Omega=u$ for all $j\in\N$.
\end{enumerate}

For each fixed $j\in\N$, choose open sets $\overline\D\cup\{2\}\subset V_{\overline\D\cup\{2\}}\subset\C,\ \overline\Omega\subset V_{\overline\Omega}\subset\C^n,\ [1,2]\subset V_{[1,2]}\subset\C$ and $0\in V_0\subset\C^n$ such that $(V_{\overline\D\cup\{2\}}\times V_{\overline\Omega})\cup(V_{[1,2]}\times V_0)\subset V_j.$
Since $t\overline\Omega\subset V_{\overline\Omega}$ for all $t\in[0,1]$, there exists an open neighborhood $N\subset\C$ of $[0,1]$ such that $z\Omega\subset V_{\overline\Omega}$ for all $z\in N$.
Choose a small number $\varepsilon>0$ such that $z\Omega\subset V_0$ for all $z\in\varepsilon\D$.
Note that there exists a continuous function $\chi$ on a neighborhood of $\overline\D\cup[1,2]$ with values in $[0,1]\subset N$ such that $\chi\equiv1$ on a neighborhood of $\overline\D\cup\{2\}$ and $\chi\equiv0$ on $[1,2]\setminus V_{\overline\D\cup\{2\}}$.
Again by Proposition \ref{prop:preapprox}, there exists a holomorphic map $\rho_j:W_j\to N$ from an open neighborhood $W_j\subset V_{\overline\D\cup\{2\}}\cup V_{[1,2]}$ of $\overline{\D}\cup[1,2]$ such that
\begin{enumerate}
\renewcommand{\labelenumi}{(\roman{enumi})}
\item $\rho_j|_\D$ is (arbitrarily) close to the constant map $1$,
\item $|\rho_j(z)|<\varepsilon$ for all $z\in[1,2]\setminus V_{\overline\D\cup\{2\}}$, and
\item $\rho_j(2)=1$.
\end{enumerate}
Shrinking $W_j$ if necessary, we may assume that $|\rho_j(z)|<\varepsilon$ for all $z\in W_j\setminus V_{\overline\D\cup\{2\}}$.
By our construction, the map $(z,x)\mapsto g_j(z,\rho_j(z)x)$ is defined and holomorphic on $W_j\times\Omega$.
We denote this map by $\widehat f_j:W_j\times\Omega\to Y$.

By the properties (a) and (i), choosing $\{\rho_j\}_{j\in\N}$ suitably, we may assume that $\widehat f_j|_{\D\times\Omega}\to\widehat f|_{\D\times\Omega}$ as $j\to\infty$.
The properties (b) and (iii) implies that $\widehat f_j(2,\cdot)|_\Omega=u\in\cU$ for all $j\in\N$.
Therefore, the associated maps $f_j:W_j\to\cO(\Omega,Y)$ to $\widehat f_j$ have the required properties.
\end{proof}

The following lemma is an immediate consequence of Lemma 1 in \cite{ForWin}.

\begin{lem}\label{lem:adjust}
Let $\{W_j\}_{j\in\N}$ be a sequence of open neighborhoods of $\overline\D\cup[1,2]\subset\C$.
Then there exists a sequence $\{\varphi_j:\D\to W_j\}_{j\in\N}$ of holomorphic discs such that
\begin{enumerate}
\item $\varphi_j\to\id_\D$ as $j\to\infty$, and
\item $2\in\varphi_j(\D)$ for all $j\in\N$.
\end{enumerate}
\end{lem}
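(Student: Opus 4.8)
The plan is to produce each $\varphi_j$ as a Riemann map onto a carefully chosen simply connected domain and to read off the convergence from Carath\'eodory's kernel convergence theorem; this is in effect Lemma~1 of \cite{ForWin} applied to the individual neighborhoods $W_j$. First I would, for each $j\in\N$, build a bounded simply connected domain $D_j$ with $\D\subset D_j\subset W_j$ and $2\in D_j$. The domain $D_j$ is obtained from $\D$ by attaching a thin ``finger'' $U_j$, i.e.\ an open, simply connected neighborhood of the segment $[1,2]$ that overlaps $\D$ near the boundary point $1$ and reaches the point $2$, with the width of $U_j$ tending to $0$ as $j\to\infty$. Since $W_j$ is an open neighborhood of the compact set $\overline\D\cup[1,2]$, such a finger can be fitted inside $W_j$, and $D_j:=\D\cup U_j$ is then open, connected, simply connected (topologically a disc), bounded, and contained in $W_j$, with $2\in U_j\subset D_j$.

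Next I would apply the Riemann mapping theorem to obtain biholomorphisms $\psi_j:\D\to D_j$ normalized by $\psi_j(0)=0$ and $\psi_j'(0)>0$, which is legitimate because $0\in\D\subset D_j$ and $D_j\neq\C$. Setting $\varphi_j:=\psi_j$ immediately gives property~(2), since $\varphi_j$ maps $\D$ biholomorphically onto $D_j\subset W_j$ and $2\in D_j=\varphi_j(\D)$. For property~(1) it remains to show $\varphi_j\to\id_\D$ uniformly on compact subsets of $\D$, and here I would invoke Carath\'eodory's kernel convergence theorem: it suffices to verify that $D_j$ converges to $\D$ in the sense of kernels with respect to the base point $0$. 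As the normalized Riemann map of $\D$ onto itself is $\id_\D$, the theorem then yields $\psi_j\to\id_\D$ on compacta.

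The crux of the argument, and the step I expect to require the most care, is the kernel computation. Every compact subset of $\D$ lies in all $D_j$ because $\D\subset D_j$, so the kernel contains $\D$; the opposite inclusion is where the geometry of the fingers matters. Since the width of $U_j$ tends to $0$, no point $w$ with $|w|>1$ admits a fixed neighborhood contained in all but finitely many $D_j$, so the kernel (being open) cannot exceed $\D$, and the same holds for every subsequence. The tension to resolve is precisely that each finger must still reach the far point $2$ while becoming so thin that it contributes nothing to the limiting kernel; balancing these two requirements (shrinking the width while keeping $2\in D_j$ and maintaining the overlap with $\D$ near $1$) is the only delicate point, and once the kernel is identified as $\D$ the convergence is automatic.
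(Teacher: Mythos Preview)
Your argument is correct and is exactly the approach the paper invokes: the paper does not give a standalone proof but simply cites Lemma~1 of \cite{ForWin}, which is precisely the Riemann-map-plus-Carath\'eodory-kernel construction you spell out. The only cosmetic point is to ensure the finger width at step $j$ is chosen both small enough to fit in $W_j$ and tending to $0$ (e.g.\ $\leq 1/j$), and that $\D\cap U_j$ is connected so that $D_j$ is simply connected; both are easily arranged.
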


\begin{proof}[Proof of Theorem \ref{thm:ddom}]
Choose a countable base $\{\cU_j\}_{j\in\N}$ for the topology of $\cO(\Omega,Y)$ and assume that $\cU_j\neq\emptyset$ for all $j\in\N$.
Let us consider the open subsets $\cV_j=\{f\in\cO(\D,\cO(\Omega,Y)):f(\D)\cap\cU_j\neq\emptyset\}$ of $\cO(\D,\cO(\Omega,Y))$.
Note that $\bigcap_{j\in\N}\cV_j$ is the set of dense holomorphic discs $\D\to\cO(\Omega,Y)$.
It suffices to show that each $\cV_j$ is dense in $\cO(\D,\cO(\Omega,Y))$ because $\cO(\D,\cO(\Omega,Y))$ is a Baire space (see Remark \ref{rem:intro}).

For any holomorphic disc $f:\D\to\cO(\Omega,Y)$, by Lemma \ref{lem:approx}, there exists a sequence $\{f_k:W_k\to\cO(\Omega,Y)\}_{k\in\N}$ of holomorphic maps from open neighborhoods of $\overline\D\cup[1,2]$ such that $f_k|_\D\to f$ as $k\to\infty$ and $f_k(2)\in\cU_j$ for all $k\in\N$.
By Lemma \ref{lem:adjust}, there exists a sequence $\{\varphi_k:\D\to W_k\}_{k\in\N}$ of holomorphic discs such that $f_k\circ\varphi_k\to f$ as $k\to\infty$ and $f_k\circ\varphi_k(\D)\cap\cU_j\neq\emptyset$ for all $k\in\N$.
This proves that $\cV_j$ is dense in $\cO(\D,\cO(\Omega,Y))$.
\end{proof}

\begin{rem}\label{rem:densedense}
The above proof also shows that for a connected complex manifold $Y$ the set of dense holomorphic discs $\D\to\cO(\Omega,Y)$ is dense in $\cO(\D,\cO(\Omega,Y))$.
In fact, if $\cO(\D\times\Omega,Y)$ is $\D$-dominated for a complex space $Y$, the set of dense holomorphic discs $\D\to\cO(\Omega,Y)$ must be dense.
This can be seen as follows.
By Corollary \ref{cor:ddomuniv}, the $\D$-dominability of $\cO(\D\times\Omega,Y)$ implies that there exists an $\cO(\D\times\Omega,Y)$-universal map for a generalized translation of the form $\tau=\sigma\times\id_\Omega$.
This gives an $\cO(\D,\cO(\Omega,Y))$-universal map for $\sigma$ (in the obvious sense).
Then it follows that a generic holomorphic disc $\D\to\cO(\Omega,Y)$ is an $\cO(\D,\cO(\Omega,Y))$-universal  map and has dense image.
\end{rem}



In fact, we may obtain the following more precise result by the arguments in \cite{ForWin} and the above lemmas with minor modifications, but we omit its proof.
We denote by $\cO(\overline{\Omega},Y)|_{\Omega}$ the set of holomorphic maps $\Omega\to Y$ which can be extended to open neighborhoods of $\overline\Omega$.

\begin{prop}\label{prop:ddom}
Let $\Omega\Subset\C^n$ be a bounded convex domain and $Y$ a connected complex manifold.
Then for every countable subset $\cS\subset\cO(\overline{\Omega},Y)|_{\Omega}$, the set $\{f\in\cO(\D,\cO(\Omega,Y)):\cS\subset f(\D)\}$ is dense in $\cO(\D,\cO(\Omega,Y))$.
\end{prop}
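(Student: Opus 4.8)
The plan is to combine the Baire-type iteration behind Theorem~\ref{thm:ddom} with an exact interpolation refinement. The essential new point, compared with Theorem~\ref{thm:ddom}, is that we must force the disc to pass \emph{exactly} through each prescribed map $s\in\cS$, rather than merely to meet a dense family of open sets. Here the hypothesis $\cS\subset\cO(\overline\Omega,Y)|_\Omega$ is decisive: in the proof of Lemma~\ref{lem:approx} the endpoint value $u$ must be holomorphic on a neighbourhood of $\overline\Omega$, and Proposition~\ref{prop:preapprox} then attains it \emph{exactly}, since $g_j(2,\cdot)|_\Omega=u$. Thus every $s\in\cS$ is an admissible endpoint value, and the excursion along $[1,2]$ followed by the reparametrisation of Lemma~\ref{lem:adjust} produces a disc hitting $s$ exactly at one interior parameter.

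Enumerate $\cS=\{s_k\}_{k\in\N}$ and fix a countable base $\{\cU_j\}_{j\in\N}$ for $\cO(\Omega,Y)$. Given a disc $f_0\in\cO(\D,\cO(\Omega,Y))$ to be approximated within $\varepsilon>0$, I would build a sequence $f_0,f_1,f_2,\dots$ converging to some $f$, interleaving two kinds of steps. In the interpolation steps I push the current disc through the next target $s_k$, recording an interior parameter $z_k\in\D$ with $f_k(z_k)=s_k$, obtained by bringing the endpoint $2$ into the disc via Lemma~\ref{lem:adjust}. In the density steps I make the disc meet the next $\cU_j$, exactly as in the proof of Theorem~\ref{thm:ddom}. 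Each modification is arranged so that (i) it stays within $2^{-k}\varepsilon$ of the previous disc, (ii) it preserves the previously fixed values $f_i(z_i)=s_i$ for $i<k$ \emph{exactly}, and (iii) it does not destroy the meeting conditions already achieved; the last is automatic because ``$g(\D)\cap\cU_j\neq\emptyset$'' is an open condition, hence stable under small perturbations. Summing the errors shows $f=\lim_kf_k$ exists, lies within $\varepsilon$ of $f_0$, satisfies $f(z_k)=s_k$ for every $k$, and has dense image; hence such $f$ form a dense subset.

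To secure (ii) I would use interpolation versions of the two lemmas, which is where the ``minor modifications'' enter. Proposition~\ref{prop:preapprox} has a form in which one interpolates along a closed complex subvariety of the source (this is part of Theorem~3.7.2 in \cite{For1}); applying it along the union of hyperplanes $\{z=z_1\},\dots,\{z=z_{k-1}\},\{z=2\}$ yields a $g_j$ with $g_j(z_i,\cdot)|_\Omega=s_i$ exactly for $i<k$ together with $g_j(2,\cdot)|_\Omega=s_k$. I would likewise choose the rescaling function $\rho_j$ from the proof of Lemma~\ref{lem:approx} to satisfy $\rho_j(z_i)=1$, so that at each fixed parameter the rescaling acts trivially and the rescaled fibre value $g_j(z_i,\rho_j(z_i)x)=g_j(z_i,x)=s_i(x)$ is untouched; this is a finite point-interpolation constraint, again supplied by Proposition~\ref{prop:preapprox}. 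Finally I would take the reparametrisations $\varphi_k$ of Lemma~\ref{lem:adjust} to fix the points $z_1,\dots,z_{k-1}$ while remaining close to $\id_\D$ with $2\in\varphi_k(\D)$, so that the composite disc reproduces each $s_i$ at $z_i$ and acquires the new value $s_k$ at $z_k=\varphi_k^{-1}(2)$.

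The main obstacle is exactly this simultaneous bookkeeping: making a single fibrewise interpolation along $\{z=z_i\}$ survive both the Oka--Weil approximation and the two reparametrisations at once, so that the composite $z\mapsto g_j\big(\varphi_k(z),\rho_j(\varphi_k(z))\,x\big)$ still equals $s_i(x)$ at $z=z_i$. This forces the compatibility conditions $\rho_j(z_i)=1$ and $\varphi_k(z_i)=z_i$ recorded above. It is also the reason one cannot simply run a Baire argument directly on $\cO(\D,\cO(\Omega,Y))$: the open conditions ``$s\in\overline{g(\D)}$'' would only yield $\cS\subset\overline{f(\D)}$, whereas attaining the points of $\cS$ exactly is precisely what the interpolation machinery delivers.
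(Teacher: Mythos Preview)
Your proposal is correct and is precisely the route the paper points to: the paper omits the proof, saying only that it follows ``by the arguments in \cite{ForWin} and the above lemmas with minor modifications,'' and your inductive interpolation scheme---with interpolating refinements of Proposition~\ref{prop:preapprox}, of the auxiliary function $\rho_j$, and of the reparametrisations $\varphi_k$---is exactly that adaptation. The one place where your bookkeeping deserves a further word is the initial radial rescaling $(z,x)\mapsto\widehat f(r^{-1}z,r^{-1}x)$ at the start of the proof of Lemma~\ref{lem:approx}: it perturbs the previously attained fibre values $f_{k-1}(z_i)=s_i$, so that the input to Proposition~\ref{prop:preapprox} no longer carries the exact data you wish to interpolate along $\{z=z_i\}$; in \cite{ForWin} this is absorbed into the Mergelyan step itself (their approximation lemma interpolates \emph{prescribed} nearby values, via a local-chart correction, not merely the current ones), and the same device works here---alternatively one lets the nodes $z_i$ drift under the successive injective $\varphi_k$ rather than pinning them, tracking $\zeta_i^{(k)}=\varphi_k^{-1}(\zeta_i^{(k-1)})$ as in \cite{ForWin}.
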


Note that we cannot omit the assumption $\cS\subset\cO(\overline{\Omega},Y)|_{\Omega}$ because bounded convex domains are immobile.
Recall that a complex space $Y$ is said to be {\it immobile} if every holomorphic map $f:\D\times Y\to Y$ such that $f(0,y)=y$ for all $y\in Y$ satisfies $f(z,y)=y$ for all $(z,y)\in\D\times Y$.
It is equivalent to saying that every nonconstant holomorphic disc $\D\to\cO(Y,Y)$ does not intersect $\Aut Y$.
It is known that every Kobayashi hyperbolic space is immobile (cf. \cite[Theorem 5.4.5]{Kob}).

We end this section with some examples of spaces of holomorphic maps which are not $\D$-dominated.
Let us introduce the notion of a Zariski topology on a space of holomorphic maps because these examples can be explained by the reducibility with respect to their Zariski topologies.
This notion is also used in Section \ref{sec:univ}.

\begin{defn}\label{defn:zar}
We say a closed subset $\cS\subset\cO(X,Y)$ to be {\it Zariski closed} if for any holomorphic map $f:Z\to\cO(X,Y)$ from any complex space $Z$, the preimage $f^{-1}(\cS)\subset Z$ is Zariski closed.
Zariski closed subsets define the {\it Zariski topology} on $\cO(X,Y)$.
\end{defn}

Note that a holomorphic map $f:Z\to\cO(X,Y)$ is continuous with respect to the Zariski topology.
Thus, if $Z$ is irreducible, the closure $\overline{f(Z)}$ must be contained in some irreducible component of $\cO(X,Y)$.
In particular, if $\cO(X,Y)$ is reducible, it is not $\D$-dominated.

\begin{ex}
Let $Y$ be a complex space containing a nontrivial entire curve $\C\to Y$.
Assume that there exists a proper closed complex subvariety $Y'\subsetneq Y$ which contains all nontrivial entire curves.
Then $\cO(\C,Y)$ is reducible.
Indeed, if we identify the set of constant maps $\C\to Y$ with $Y$, then $Y$ and $\cO(\C,Y')$ are proper Zariski closed subset of $\cO(\C,Y)$ and we have $\cO(\C,Y)=Y\cup\cO(\C,Y')$.
Thus, we cannot omit the assumption that $\Omega$ is bounded in Theorem \ref{thm:ddom}.
\end{ex}

\begin{ex}
Let $A=\{z\in\C:1/r<|z|<r\},\ r>1$ be an annulus.
Note that $\cO(A,A)$ has three path components, namely the component $\cP_0$ of null-homotopic maps, the component $\cP_1$ of degree one maps and the component $\cP_{-1}$ of degree minus one maps.
Then $\cP_0$ is $\D$-dominated (it follows from the $\D$-dominability of $\cO(A,\D)$), but others are not $\D$-dominated because $A$ is immobile and $\Aut A=\cP_1\cup\cP_{-1}$.
It also follows that the Zariski topology on $\Aut A$ coincides with the compact-open topology.
Thus $\cP_1$ and $\cP_{-1}$ are highly reducible.
By this example, we cannot replace the assumption that $\Omega$ is convex by the pseudoconvexity in Theorem \ref{thm:ddom} even if we consider the $\D$-dominability of path components.
\end{ex}

\begin{ex}
Forstneri\v{c} and Winkelmann \cite{ForWin} showed that there exists an irreducible singular surface $Y$ such that the set of dense holomorphic discs $\D\to Y$ is not dense in $\cO(\D,Y)$.
By Remark \ref{rem:densedense}, $\cO(\D,Y)$ is not $\D$-dominated for such a surface $Y$.
Thus, we cannot omit the assumption that $Y$ is nonsingular in Theorem \ref{thm:ddom}.
\end{ex}



\section{The $\C$-dominability and Oka manifolds: Proof of Theorem \ref{thm:cdomoka}}\label{sec:cdom}

We briefly review the definition of an Oka manifold and Forstneri\v{c}'s Oka principle.
For further details on Oka theory, we refer to the comprehensive monograph \cite{For1} and the survey \cite{For2}.

Let $z=(z_1,\ldots,z_n)$, with $z_j=x_j+iy_j$, denote the complex coordinates on $\C^n$.
A map from a compact set $K$ is said to be holomorphic if it is holomorphic on an open neighborhood of $K$.
We denote by $\cO(K,Y)$ the space of holomorphic maps $K\to Y$ equipped with the compact-open topology.

\begin{defn}\label{defn:oka}
A {\it special convex pair} $(K,Q)$ in $\C^n$ consists of a closed cube $Q=\{z\in\C^n:|x_j|\leq a_j,\ |y_j|\leq b_j,\ j=1,\ldots,n\}$ and a compact convex set $K=\{z\in Q:y_n\leq h(z_1,\ldots,z_{n-1},x_n)\}$, where $h$ is a smooth concave function with values in $(-b_n,b_n)$.

A complex manifold $Y$ is called an {\it Oka manifold} if for each special convex pair $(K,Q)$ the restriction map $\cO(Q,Y)\to\cO(K,Y)$ has dense image.
\end{defn}

The most important property of Oka manifolds is that maps from Stein spaces to Oka manifolds enjoy the following Oka principle.

\begin{thm}[{cf. \cite[Theorem 5.4.4]{For1}}]\label{thm:okaprinciple}
Assume that $X$ is a Stein space and $Y$ is an Oka manifold.
Let $K$ be a compact $\cO(X)$-convex subset of $X$, $X'$ a closed complex subvariety of $X$, $P_0\subset P$ compact sets in a Euclidean space $\R^n$ and $f:P\times X\to Y$ a continuous map such that
\begin{enumerate}
\renewcommand{\labelenumi}{(\alph{enumi})}
\item for any $p\in P$, $f(p,\cdot):X\to Y$ is holomorphic on a neighborhood of K (independent of $p$) and on $X'$, and
\item $f(p,\cdot)$ is holomorphic on X for all $p\in P_0$.
\end{enumerate}
Then there exists a homotopy $f_t:P\times X\to Y,\ t\in[0,1]$ with $f_0=f$ such that each $f_t$
satisfies properties $\rm (a)$ and $\rm (b)$, and the following hold:
\begin{enumerate}
\renewcommand{\labelenumi}{(\roman{enumi})}
\item $f_1(p,\cdot)$ is holomorphic on X for all $p\in P$,
\item $f_t$ is uniformly close to $f$ on $P\times K$ for all $t\in[0,1]$, and
\item $f_t=f$ on $(P_0\times X)\cup(P\times X')$ for all $t\in[0,1]$.
\end{enumerate}
\end{thm}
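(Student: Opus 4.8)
The plan is to derive this parametric Oka principle, with approximation on $K$ and interpolation on $X'$ and on the subparameter $P_0$, from the defining property of an Oka manifold in Definition \ref{defn:oka}, namely the density of the restriction $\cO(Q,Y)\to\cO(K,Y)$ on special convex pairs. That density is exactly the \emph{convex approximation property} (CAP), and the entire point is that CAP on these simple local models $(K,Q)$ propagates to global approximation/interpolation statements for maps out of an arbitrary Stein space. So the first thing I would do is recast the hypotheses: for every parameter $p$ the map $f(p,\cdot)$ is holomorphic near $K$ and on $X'$, and holomorphic on all of $X$ when $p\in P_0$; I must deform $f$ to a map holomorphic on $X$ for every $p$, keeping it fixed over $(P_0\times X)\cup(P\times X')$ and uniformly close to the original on $P\times K$.

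Next I would fix a strictly plurisubharmonic exhaustion function $\rho$ on the Stein space $X$, adapted to $X'$, and choose regular values $c_0<c_1<\cdots\to\infty$ producing an exhaustion $K\subset A_0\subset A_1\subset\cdots$ by compact $\cO(X)$-convex sets $A_j=\{\rho\le c_j\}$. The argument is then an induction on $j$: at each stage I have a map holomorphic on a neighborhood of $A_j$ (and on $X'$, and on all of $X$ for $p\in P_0$), and I must extend the holomorphy across $A_{j+1}$ with only a small change on $A_j$, relative to the fixed sets. Each step splits into a \emph{noncritical} case, where no critical point of $\rho$ lies between the two levels, and finitely many \emph{critical} cases in which one Morse critical point is crossed.

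The heart of the argument, and the step I expect to be the main obstacle, is the noncritical inductive step, where $A_{j+1}=A_j\cup B$ is a Cartan pair with $B$ a small convex bump biholomorphic to a special convex pair. The technique I would use is the one by which CAP is known to imply the Oka principle: over a Stein neighborhood of $A_j$ I construct a local dominating spray $s(x,w)$, with fiber variable $w$ in some $\C^N$ and $s(x,0)$ equal to the current map, built from a holomorphic frame of the pullback of $TY$ together with the exponential map of a metric; over the bump $B$ I use the CAP of $Y$ to approximate the current map by one holomorphic on the larger convex piece. The discrepancy of the two holomorphic maps on the overlap $A_j\cap B$ is then encoded by a $\C^N$-valued holomorphic map, namely the spray parameter $w$ solving $s(x,w)=(\text{the other map})$, and by the classical Cartan splitting lemma this decomposes as a difference of $\C^N$-valued maps extending holomorphically to $A_j$ and to $B$ respectively; feeding these back through the spray glues the two maps into a single holomorphic map on $A_{j+1}$. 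The genuine difficulty is to run this gluing \emph{while preserving interpolation on $X'$ and the constraint over $P_0$}: the spray parameter, and hence the whole correction, must be taken to vanish on $X'$, and every object — the local spray, the CAP approximation, and the Cartan splitting — must depend continuously on $p\in P$ and reduce to no correction for $p\in P_0$. This is precisely the content of the parametric CAP with interpolation for Oka manifolds, which I would invoke from \cite{For1}; the parameter $p$ ranges over a compact polyhedral set $P\subset\R^n$ and can be carried along as an inert factor throughout.

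The critical steps I would handle by the standard device of attaching a handle across the Morse critical point of $\rho$ and extending the map holomorphically over a thin neighborhood of the corresponding totally real disc; after a local chart this neighborhood is again convex, so it reduces once more to the CAP local model, still relative to $X'$ and $P_0$. Finally I would arrange the successive corrections so that the change on each $A_j$ is summable. The resulting sequence of maps then converges uniformly on compact sets to a limit $f_1$ holomorphic on all of $X$ for every $p$, giving property $\rm(i)$; concatenating and reparametrizing on $[0,1]$ the small homotopies produced at each stage yields the homotopy $f_t$, with property $\rm(ii)$ guaranteed by the summable smallness on $P\times K$ and property $\rm(iii)$ by the relative nature of every single step.
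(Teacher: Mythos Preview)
The paper does not prove this theorem at all: it is quoted from \cite[Theorem 5.4.4]{For1} as a background result and used as a black box in the proofs of Theorems \ref{thm:cdomoka} and \ref{thm:univoka} and Corollary \ref{cor:cdomuniv}. So there is nothing in the paper to compare your argument against.

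That said, what you have written is a faithful high-level outline of the proof as it actually appears in \cite{For1}: the reduction of CAP (the density condition in Definition \ref{defn:oka}) to the full parametric Oka principle via a plurisubharmonic exhaustion, the induction over sublevel sets with noncritical steps handled by dominating sprays and Cartan-pair gluing, critical steps by handle attachment across a totally real disc, and the relative/parametric bookkeeping that keeps everything fixed on $X'$ and over $P_0$. The one place where your sketch is a genuine sketch rather than a proof is exactly where you flag it: making the spray, the CAP approximation, and the splitting lemma all depend continuously on $p\in P$ and vanish on $X'$ and over $P_0$ is where the real technical work lies, and you are right to defer to \cite{For1} for it. For the purposes of this paper that is entirely appropriate, since the author does the same.
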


Let us prove Theorem \ref{thm:cdomoka}.
In fact, we shall prove the following stronger result.

\begin{thm}\label{thm:acconn}
Let $Y$ be a connected complex manifold.
Assume that for any bounded convex domain $\Omega\Subset\C^n$ and any pair of nonempty open subsets $\cU,\cV\subset\cO(\Omega,Y)$, there exists an entire curve $f:\C\to\cO(\Omega,Y)$ such that $f(0)\in\cU$ and $f(1)\in\cV$.
Then $Y$ is Oka.
\end{thm}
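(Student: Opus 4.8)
The plan is to verify the defining property of an Oka manifold from Definition \ref{defn:oka} directly: given a special convex pair $(K,Q)$ in $\C^n$, I must show that the restriction $\cO(Q,Y)\to\cO(K,Y)$ has dense image. Equivalently, fixing a map $g$ holomorphic on a neighborhood of $K$ and an $\varepsilon>0$, I look for a map holomorphic on a neighborhood of $Q$ that is $\varepsilon$-close to $g$ on $K$ (for a fixed distance on $Y$). Since $Y$ is only assumed to satisfy the entire-curve hypothesis and is not yet known to be Oka, I cannot invoke the Oka principle of Theorem \ref{thm:okaprinciple}; the approximating map must be manufactured out of the entire curves the hypothesis provides.

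The central idea is to read the single entire parameter as the coordinate transverse to the convex bump. The cut defining $K$ is already in the last variable $z_n$, so writing $z=(z',z_n)$ with $z'\in\C^{n-1}$ we have $Q=Q'\times Q_n$, with $Q'\subset\C^{n-1}$ a cube, and $K=\{(z',z_n):z'\in Q',\ y_n\le h(z',x_n)\}$. Choosing a bounded convex domain $\Omega\subset\C^{n-1}$ with $Q'\Subset\Omega$, any entire curve $f:\C\to\cO(\Omega,Y)$ corresponds to a holomorphic map $\widehat f:\C\times\Omega\to Y$ which, being entire in $z_n$, is automatically holomorphic on a neighborhood of all of $Q$. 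Thus every such $\widehat f$ restricts to an element of $\cO(Q,Y)$, and the entire problem collapses to arranging that $\widehat f$ approximate $g$ on $K$. This is exactly what makes the hypothesis the right tool: it delivers maps of precisely the shape $\C\times\Omega\to Y$ that are holomorphic past the graph $y_n=h$.

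To match $g$ on $K$, I would not use the two prescribed values $f(0)\in\cU$, $f(1)\in\cV$ crudely, but upgrade the hypothesis to control an entire curve over a whole closed disc containing the relevant range of the parameter $z_n$, in the spirit of the proof of Theorem \ref{thm:ddom}. Concretely, I would establish an analogue of Lemma \ref{lem:approx}: starting from a prescribed holomorphic disc in $\cO(\Omega,Y)$, use Proposition \ref{prop:preapprox} and the convexity of $\Omega$ to approximate it while simultaneously steering the curve into a prescribed open set at a far parameter value, and then run a Baire-category argument over a countable base to produce an entire curve whose restriction approximates the prescribed family on the needed compact parameter set. Since $Y$ is connected and $K\hookrightarrow Q$ is a homotopy equivalence, I can first extend $g$ to a continuous map on $Q$; and subdividing $Q$ into finitely many thin special convex pairs reduces the matching to the case of an arbitrarily thin cap, which is what makes the disc approximation applicable.

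The hard part will be precisely this promotion from the two-point connectivity of the hypothesis to an approximation of the prescribed holomorphic data $z_n\mapsto g(\cdot,z_n)$ over the full $z_n$-range of $K$, carried out coherently in the $z'$-variable. The difficulty is twofold: the slices $g(\cdot,z_n)$ are germs on shrinking subsets of $Q'$ rather than on all of $\Omega$, so they must first be extended holomorphically into $\Omega$ using the convexity near $K$; and the entire-ness of $\widehat f$ in $z_n$ is a global constraint that cannot be prescribed locally, so the approximation on $K$ must be achieved without over-determining the fill over the cap $Q\setminus K$. Reconciling these two demands — prescribed values on $K$ against free but holomorphic values on the cap — by means of the convex approximation lemmas of Section \ref{sec:ddom} is where the real work lies. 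Once it is done, $\widehat f|_Q$ is the sought element of $\cO(Q,Y)$, and the verification of Definition \ref{defn:oka} is complete.
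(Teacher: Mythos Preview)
Your plan has a genuine gap at the step you yourself flag as hard: the ``promotion'' from the two-point hypothesis to approximation of the prescribed family $z_n\mapsto g(\cdot,z_n)$ on an entire compact parameter set. The tools you list do not accomplish this. Lemma~\ref{lem:approx} and Proposition~\ref{prop:preapprox} produce maps on \emph{bounded} neighborhoods of $\overline\D\cup[1,2]$, never entire ones; and the Baire-category argument of Theorem~\ref{thm:ddom} shows that a generic \emph{disc} is dense, which is a different statement from approximating a fixed disc by an \emph{entire} curve. The hypothesis of the theorem hands you entire curves with control at exactly two parameter values, and nothing in your outline explains how those two degrees of freedom become control on a whole compact set. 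That promotion is essentially an Oka-type approximation statement in its own right, so invoking it here would be circular.

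The paper avoids this trap by never asking for more than two-point control. After linearizing the concave cut into finitely many affine slabs $K_0=Q\supset K_1\supset\cdots\supset K_k$, each step $\cO(K_{l-1},Y)\to\cO(K_l,Y)$ is handled by a Cartan-pair gluing: one builds a smooth strongly pseudoconvex Cartan pair $(D_0,D_1)$ with $K_l\subset D_0$, $K_{l-1}\subset D_0\cup D_1$, and overlap $L=\overline{D_0\cap D_1}$ disjoint from $K_l$ with $K_l\cup\widehat L$ polynomially convex. One then applies the hypothesis not to $\cO(\Omega,Y)$ but to $\cO(\Omega\times\B^m,Y)$, prescribing $f(0)(\cdot,0)$ close to $g_0$ on $K_l$ and $f(1)$ close to a dominating local spray $\pi:\bar D_1\times\B^m\to Y$ on $L\times\B^m$. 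An Oka--Weil function $\varphi$ that is $\approx 0$ on $K_l\times\B^m$ and $\approx 1$ on $L\times\B^m$ turns the entire curve into a single local spray $g=\ev(f\circ\varphi,\id)$ on $\bar D_0\times\B^m$ that matches $g_0$ along the core and matches $\pi$ over the overlap. The spray structure is what makes the final step work: Forstneri\v{c}'s gluing lemmas (\cite[Lemma~5.9.3, Proposition~5.8.1]{For1}) then splice $g$ and $\pi$ into a holomorphic map on $D_0\cup D_1\supset K_{l-1}$. Your approach, by contrast, tries to identify the entire parameter with $z_n$ itself and uses no sprays; without the dominating spray and the splitting lemma there is no mechanism to glue across the cap, and without the Cartan-pair separation there is no way to squeeze the problem down to the two-point data the hypothesis actually provides.
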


\begin{proof}
Let $(K,Q)$ be a special convex pair in $\C^n$ and write $K=\{z\in Q:y_n\leq h(z_1,\ldots,z_{n-1},x_n)\}$ as in Definition \ref{defn:oka}.
We would like to show that the restriction map $\cO(Q,Y)\to\cO(K,Y)$ has dense image.

Take an arbitrary open neighborhood $W$ of $K$.
By considering tangent spaces of the graph of the smooth concave function $h$, we may find $\R$-affine functions $h_1,\ldots,h_k$ on $\C^{n-1}\times\R\cong\R^{2n-1}$ such that
\begin{align*}
K\subset\{z\in Q:y_n\leq h_j(z_1,\ldots,z_{n-1},x_n),\ j=1,\ldots,k\}\subset W.
\end{align*}
For each $l=1,\ldots,k$, let $K_l=\{z\in Q:y_n\leq h_j(z_1,\ldots,z_{n-1},x_n),\ j=1,\ldots,l\}$ and $K_0=Q$.
In order to approximate a holomorphic map in $\cO(K,Y)$ which is holomorphic on $W$, it suffices to show that each restriction map $\cO(K_{l-1},Y)\to\cO(K_l,Y)$ has dense image for $l=1,\ldots,k$.
For this purpose, we use a gluing method in Oka theory (cf. \cite[\S5.9]{For1}).

Let $g_0:\Omega\to Y$ be a holomorphic map from an open neighborhood of $K_l$.
We may assume that $\Omega$ is a bounded convex domain.
Let us construct a smooth strongly pseudoconvex Cartan pair $(D_0,D_1)$ (for the notion of a Cartan pair, see \cite[Definition 5.7.1]{For1}) such that if we set $L=\overline{D_0\cap D_1}$ then
\begin{enumerate}
\renewcommand{\labelenumi}{(\alph{enumi})}
\item $K_l\subset D_0\Subset\Omega,\ K_{l-1}\subset D_0\cup D_1$,
\item $K_l\cap\widehat L_{\cO(\C^n)}=\emptyset$, and $K_l\cup\widehat L_{\cO(\C^n)}$ is polynomially convex.
\end{enumerate}
Choose a small number $\varepsilon>0$ such that $A=\{z\in K_{l-1}:y_n\leq h_l(z_1,\ldots,z_{n-1},x_n)+3\varepsilon\}\subset\Omega$ and let $B=\{z\in K_{l-1}:y_n\geq h_l(z_1,\ldots,z_{n-1},x_n)+2\varepsilon\}$.
Then $(A,B)$ is a Cartan pair in $\C^n$ because $A,\ B,\ A\cup B$ and $A\cap B$ are convex and $\overline{A\setminus B}\cap\overline{B\setminus A}=\emptyset$.
Note that $K_l\subset A$ and $K_{l-1}=A\cup B$.
For the open neighborhood $\Omega$ of $A$ and the open neighborhood $U=\{z\in\C^n:y_n>h_k(z_1,\ldots,z_{n-1},x_n)+\varepsilon\}$ of $B$, there exists a smooth strongly pseudoconvex Cartan pair $(D_0,D_1)$ such that $A\subset D_0\Subset\Omega$ and $B\subset D_1\Subset U$ (see \cite[Proposition 5.7.3]{For1}).
Then $(D_0,D_1)$ satisfies the required properties (see Proposition \ref{prop:disjoint}).

Put $m=\dim Y$.
The composition of the projection map $\bar D_1\times\B^m\to\B^m$ and an open embedding map $\B^m\to Y$ gives a dominating local holomorphic spray $\pi:\bar D_1\times\B^m\to Y$ (for the notion of a local holomorphic spray, see \cite[Definition 5.9.1]{For1}).

By assumption there exists an entire curve $f:\C\to\cO(\Omega\times\B^m,Y)$ such that $f(0)(\cdot,0)$ and $f(1)$ approximate sufficiently $g_0$ on $K_l$ and $\pi$ on $L\times\B^m$, respectively.
By the conditions (a) and (b) of $(D_0,D_1)$ and by Oka-Weil theorem, there exists $\varphi\in\cO(\C^{n+m})$ such that $\varphi|_{K_l\times\B^m}$ is sufficiently close to $0$ and $\varphi|_{L\times\B^m}$ is sufficiently close to $1$.
Consider $g={\ev}(f\circ\varphi,\id_{\bar{D}_0\times\B^m}):\bar{D}_0\times\B^m\to Y$.
It is a local holomorphic spray which is sufficiently close to $\pi$ on $L\times\B^m$ and $g(\cdot,0)$ is sufficiently close to $g_0$ on $K_l$.
The closeness between $g$ and $\pi$ on $L\times\B^m$ implies that there exist a small number $0<\delta<1$ and a continuous map $\gamma:L\times\delta\B^m\to L\times\B^m$ of the form $\gamma(z,w)=(z,\psi(z,w))$ which is holomorphic on $(D_0\cap D_1)\times\delta\B^m$, sufficiently close to the inclusion map $L\times\delta\B^m\hookrightarrow L\times\B^m$ and satisfies $g\circ\gamma=\pi$ on $L\times\delta\B^m$ (see \cite[Lemma 5.9.3]{For1}).
Then the closeness between $\gamma$ and the inclusion map implies that there exist holomorphic maps
\begin{align*}
\alpha:D_0\times\delta^2\B^m\to D_0\times\delta\B^m,\ \beta:D_1\times\delta^2\B^m\to D_1\times\delta\B^m
\end{align*}
which are sufficiently close to the inclusion maps and satisfy $\gamma\circ\beta=\alpha$ on $(D_0\cap D_1)\times\delta^2\B^m$ (see \cite[Proposition 5.8.1]{For1}).
Since $g\circ\alpha=\pi\circ\beta$ on $(D_0\cap D_1)\times\delta^2\B^m$, $g\circ\alpha$ and $\pi\circ\beta$ amalgamate into a holomophic map $g':(D_0\cup D_1)\times\delta^2\B^m\to Y$ such that the restricted map $g'(\cdot,0):K_{l-1}\to Y$ approximates $g_0$ on $K_l$.
\end{proof}

\begin{proof}[Proof of Theorem \ref{thm:cdomoka}]
$(1)\implies(2):$ It is an easy application of Oka principle (Theorem \ref{thm:okaprinciple}).
We may choose a dense sequence $\{f_j\}_{j\in\N}\subset\cP$ since $\cP$ is separable.
Since there exists a continuous map $f:\C\times X\to Y$ such that $f(j,\cdot)=f_j$ for all $j\in\N$, there exists a holomorphic map $\widetilde f:\C\times X\to Y$ satisfying the same condition by Theorem \ref{thm:okaprinciple}.
The associated map to $\widetilde f$ gives a dense entire curve $\C\to\cP$.
\\
$(2)\implies(3):$ It is obvious since bounded convex domains are Stein.
\\
$(3)\implies(1):$ Since a dense entire curve in $\cO(\Omega,Y)$ intersects all nonempty open subsets in $\cO(\Omega,Y)$, Theorem \ref{thm:acconn} implies that $Y$ is Oka.
\end{proof}

We explain another characterization of Oka manifolds which can be obtained by Theorem \ref{thm:acconn}.
Let $X$ and $Y$ be complex spaces.
We say a subspace $\cS\subset\cO(X,Y)$ to be {\it strongly $\C$-connected} if any pair of points in $\cS$ can be joined by an entire curve $\C\to\cS$.
Note that for any Stein space $X$ and any Oka manifold $Y$, every path component of $\cO(X,Y)$ is strongly $\C$-connected by Theorem \ref{thm:okaprinciple}.
The following characterization of Oka manifolds is also an immediate corollary of Theorem \ref{thm:acconn}.

\begin{cor}\label{cor:cconn}
For a connected complex manifold $Y$, the following are equivalent:
\begin{enumerate}
\item $Y$ is Oka.
\item For any Stein space $X$, every path component $\cP\subset\cO(X,Y)$ is strongly $\C$-connected.
\item For any bounded convex domain $\Omega\Subset\C^n$, the space $\cO(\Omega,Y)$ is strongly $\C$-connected.
\end{enumerate}
\end{cor}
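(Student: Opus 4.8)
The plan is to prove the cyclic chain $(1)\Rightarrow(2)\Rightarrow(3)\Rightarrow(1)$, in which essentially all of the substance is already packaged: $(3)\Rightarrow(1)$ is a direct invocation of Theorem \ref{thm:acconn}, $(1)\Rightarrow(2)$ is the observation recorded immediately before the statement, and $(2)\Rightarrow(3)$ is a point-set reduction.

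For $(1)\Rightarrow(2)$ I would argue as in the remark preceding the corollary. Given $f_0,f_1$ in one path component $\cP\subset\cO(X,Y)$, I choose a continuous path $c:[0,1]\to\cP$ from $f_0$ to $f_1$ together with a continuous retraction $r:\C\to[0,1]$ satisfying $r(0)=0$ and $r(1)=1$; then the associated map $\hat\phi:\C\times X\to Y$, $\hat\phi(z,x)=c(r(z))(x)$, is continuous and holomorphic along the closed complex subvariety $\{0,1\}\times X$ of the Stein space $\C\times X$. Applying the Oka principle (Theorem \ref{thm:okaprinciple}) with trivial parameter, empty compact set, and interpolation over $X'=\{0,1\}\times X$, I obtain a holomorphic $\tilde\phi:\C\times X\to Y$ with $\tilde\phi=\hat\phi$ on $X'$, i.e. $\tilde\phi(0,\cdot)=f_0$ and $\tilde\phi(1,\cdot)=f_1$. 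The associated entire curve $g:\C\to\cO(X,Y)$, $g(z)=\tilde\phi(z,\cdot)$, then joins $f_0$ and $f_1$; since $g$ is continuous and $\C$ is path-connected, the image $g(\C)$ lies in a single path component, and as $g(0)=f_0\in\cP$ we conclude $g(\C)\subset\cP$. Thus $\cP$ is strongly $\C$-connected.

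For $(2)\Rightarrow(3)$, a bounded convex domain $\Omega$ is Stein, so $(2)$ applies. The only thing to check is that $\cO(\Omega,Y)$ is a single path component, i.e. path-connected. This follows from convexity: fixing $z_0\in\Omega$ and setting $f_t(z)=f(tz+(1-t)z_0)$, the convex combination stays in $\Omega$, so each $f_t$ lies in $\cO(\Omega,Y)$ and $t\mapsto f_t$ is a path from $f$ to the constant map with value $f(z_0)$; since the constants form a copy of the connected manifold $Y$, every map is joined to every other, and $\cO(\Omega,Y)$ is path-connected. By $(2)$ it is therefore strongly $\C$-connected.

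Finally, for $(3)\Rightarrow(1)$ I would verify the hypothesis of Theorem \ref{thm:acconn}. Let $\Omega\Subset\C^n$ be a bounded convex domain and let $\cU,\cV\subset\cO(\Omega,Y)$ be nonempty open sets. Pick $u\in\cU$ and $v\in\cV$; if $u=v$ a constant entire curve suffices, and otherwise $(3)$ supplies an entire curve $f:\C\to\cO(\Omega,Y)$ with $f(a)=u$ and $f(b)=v$ for some $a\neq b$, so precomposing $f$ with the affine automorphism of $\C$ carrying $0$ to $a$ and $1$ to $b$ yields an entire curve with $f(0)\in\cU$ and $f(1)\in\cV$. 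This is precisely the hypothesis of Theorem \ref{thm:acconn}, which then forces $Y$ to be Oka. There is no serious obstacle, as the real construction is quarantined in Theorem \ref{thm:acconn}; the only points deserving care are applying the interpolative Oka principle over the non-compact base $\C$ in $(1)\Rightarrow(2)$, handled by viewing $\{0,1\}\times X$ as a closed subvariety of the Stein space $\C\times X$, and the path-connectedness of $\cO(\Omega,Y)$ in $(2)\Rightarrow(3)$.
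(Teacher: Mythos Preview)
Your proof is correct and follows essentially the same approach as the paper, which only states the result as an ``immediate corollary'' of Theorem \ref{thm:acconn} together with the remark (made just before the statement) that $(1)\Rightarrow(2)$ follows from Theorem \ref{thm:okaprinciple}. You have simply written out the details the paper leaves implicit, in particular the path-connectedness of $\cO(\Omega,Y)$ needed for $(2)\Rightarrow(3)$.
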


The above characterization can be reformulated into more interesting form.
For $x\in X$, we write $\ev_x={\ev}(\cdot,x):\cO(X,Y)\to Y$.
As we mentioned in Remark \ref{rem:intro}, for any holomorphic map $f:Z\to\cO(X,Y)$ from a complex space, the composition ${\ev_x}\circ f:Z\to Y$ is holomorphic.
Therefore it is interesting to ask when $(\ev_0,\ev_1):\cO(\C,Y)\to Y^2$ is an Oka map, namely we call it an {\it Oka map} if it is a Serre fibration and it enjoys the parametric Oka property for Euclidean parameter spaces (in the obvious sense, see \cite[Definition 6.14.7]{For1}).

\begin{cor}
For a connected complex manifold $Y$, the following are equivalent:
\begin{enumerate}
\item $Y$ is Oka.
\item $(\ev_0,\ev_1):\cO(\C,Y)\to Y^2$ is an Oka map.
\item For any bounded convex domain $\Omega\Subset\C^n$, every holomorphic map $\Omega\to Y^2$ lifts to a holomorphic map $\Omega\to\cO(\C,Y)$.
\end{enumerate}
\end{cor}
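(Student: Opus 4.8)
The plan is to prove the cycle $(1)\implies(2)\implies(3)\implies(1)$, with $(3)\implies(1)$ reducing to Corollary \ref{cor:cconn} and essentially all the work concentrated in $(1)\implies(2)$. The organizing observation throughout is the identification $\cO(X,\cO(\C,Y))\cong\cO(X\times\C,Y)$ from Remark \ref{rem:intro}(2). Under this homeomorphism, lifting a holomorphic map $f=(a,b):X\to Y^2$ through $(\ev_0,\ev_1)$ — that is, finding $F:X\to\cO(\C,Y)$ with $\ev_0\circ F=a$ and $\ev_1\circ F=b$ — is exactly the problem of producing a holomorphic $H:X\times\C\to Y$ with $H|_{X\times\{0\}}=a$ and $H|_{X\times\{1\}}=b$. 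In other words, a lifting problem for $(\ev_0,\ev_1)$ over a Stein base $X$ is precisely an interpolation problem from the closed complex subvariety $X'=X\times\{0,1\}\subset X\times\C$, and since $X\times\C$ is Stein whenever $X$ is, this is the situation governed by the interpolation form of the Oka principle, Theorem \ref{thm:okaprinciple}.

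For $(1)\implies(2)$ I would verify directly that $(\ev_0,\ev_1)$ is an Oka map. It is a holomorphic submersion, and strong $\C$-connectedness (Corollary \ref{cor:cconn}) makes it surjective onto $Y^2$. To obtain the parametric Oka property for Euclidean parameter spaces, I feed the data of a lifting problem — a continuous family of lifts, holomorphic over a subfamily and near a compact $\cO(X)$-convex set — into Theorem \ref{thm:okaprinciple} with the interpolation subvariety $X'=X\times\{0,1\}$; conclusions (i)--(iii) of that theorem translate back, via the identification above, into precisely the holomorphy, approximation, and interpolation statements demanded of an Oka map. The Serre fibration property is obtained in the same manner, applying the homotopy-lifting version of Theorem \ref{thm:okaprinciple} with parameter cube $P\times[0,1]$.

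For $(2)\implies(3)$, let $\Omega\Subset\C^n$ be bounded convex, hence Stein and contractible, and let $g:\Omega\to Y^2$ be holomorphic. Since $(\ev_0,\ev_1)$ is a Serre fibration and $\Omega$ is contractible, $g$ admits a continuous lift, and the Oka property of the map then deforms this continuous lift into a holomorphic lift $\Omega\to\cO(\C,Y)$, which is exactly what $(3)$ asserts. For $(3)\implies(1)$, given $a,b\in\cO(\Omega,Y)$ I apply $(3)$ to $g=(a,b):\Omega\to Y^2$ to obtain a lift $F:\Omega\to\cO(\C,Y)$; its associate $\widehat F:\Omega\times\C\to Y$ furnishes the entire curve $z\mapsto\widehat F(\cdot,z)$ in $\cO(\Omega,Y)$ joining $a$ (at $z=0$) to $b$ (at $z=1$). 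Hence $\cO(\Omega,Y)$ is strongly $\C$-connected for every bounded convex $\Omega$, and Corollary \ref{cor:cconn} yields that $Y$ is Oka.

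The main obstacle is $(1)\implies(2)$: one must match the rigid definition of an Oka map (a Serre fibration together with the parametric Oka property for Euclidean parameter spaces) against the conclusions of Theorem \ref{thm:okaprinciple}, carefully tracking the parameter spaces, the $\cO(X)$-convex compact set, and the interpolation subvariety $X\times\{0,1\}$ under the identification $\cO(X,\cO(\C,Y))\cong\cO(X\times\C,Y)$. In particular, establishing the Serre fibration property cleanly — as opposed to merely the holomorphic lifting over Stein bases — is the most delicate point, since it is a homotopy-theoretic condition rather than a statement about a single Stein source.
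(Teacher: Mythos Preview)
Your proposal is correct and follows essentially the same architecture as the paper: identify condition (3) with strong $\C$-connectedness of $\cO(\Omega,Y)$ and invoke Corollary \ref{cor:cconn}, and for $(1)\Rightarrow(2)$ translate the lifting data through $\cO(X,\cO(\C,Y))\cong\cO(X\times\C,Y)$ into an interpolation problem along $X\times\{0,1\}$ handled by Theorem \ref{thm:okaprinciple}.

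The one genuine difference is how the Serre fibration property is obtained. The paper does \emph{not} extract it from Theorem \ref{thm:okaprinciple}; instead it observes that $(\ev_0,\ev_1)$ is the precomposition map $\iota^*:\cO(\C,Y)\to\cO(\{0,1\},Y)$ for the inclusion $\iota:\{0,1\}\hookrightarrow\C$, and cites L\'arusson \cite[\S16]{Lar} for the general fact that such precomposition maps are Serre fibrations. This holds for \emph{any} complex manifold $Y$, Oka or not, so the paper cleanly separates the purely homotopy-theoretic ingredient from the Oka-theoretic one. Your route---deriving homotopy lifting from Theorem \ref{thm:okaprinciple} with parameter set $P\times[0,1]$ and $P_0=P\times\{0\}$---also works, but you should note that Theorem \ref{thm:okaprinciple} requires as input a continuous map already defined on all of $P\times[0,1]\times\C$; you must first extend the given data off $(P\times\{0\}\times\C)\cup(P\times[0,1]\times\{0,1\})$ continuously, which is possible precisely because $\{0,1\}\hookrightarrow\C$ is a cofibration. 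The paper's citation to \cite{Lar} absorbs exactly this step.
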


\begin{proof}
The condition (3) is a simple rephrasing of the condition (3) in Corollary \ref{cor:cconn}.
We only need to prove that $(1)\implies(2)$.
Assume that $Y$ is Oka and denote by $\iota:\{0,1\}\hookrightarrow\C$ the inclusion map.
Note that $(\ev_0,\ev_1):\cO(\C,Y)\to Y^2$ can be identified with the precomposition map $\iota^*:\cO(\C,Y)\to\cO(\{0,1\},Y)$.
Therefore $(\ev_0,\ev_1)$ is a Serre fibration (see \cite[\S16]{Lar}).
By considering the associated map to $P\times X\to\cO(\C,Y)$ and using the parametric Oka property of $Y$ (see Theorem \ref{thm:okaprinciple}), we may easily obtain the parametric Oka property of $(\ev_0,\ev_1)$.
\end{proof}

\section{Universal maps: Applications of dense holomorphic curves}\label{sec:univ}

The following characterizes automorphisms admitting universal functions.

\begin{thm}[cf. {\cite[Theorem 6]{Zaj}}]\label{thm:Zaj}
Let $X$ be a Stein space and $\tau\in\Aut X$ an automorphism.
Then there exists an $\cO(X)$-universal function for $\tau$ if and only if for any compact $\cO(X)$-convex subset $K\subset X$ there exists $j\in\N$ such that $\tau^j(K)\cap K=\emptyset$ and $\tau^j(K)\cup K$ is $\cO(X)$-convex.
\end{thm}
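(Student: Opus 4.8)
The plan is to read the statement as a hypercyclicity result for the sequence of composition operators $C_j\colon\cO(X)\to\cO(X)$, $C_j(f)=f\circ\tau^j$, and to prove the two directions through the Birkhoff transitivity theorem. Since $\cO(X)=\cO(X,\C)$ is separable and Baire (Remark \ref{rem:intro}(1)) and each $C_j$ is continuous, an $\cO(X)$-universal function for $\tau$ exists if and only if the family $\{C_j\}_{j\in\N}$ is \emph{topologically transitive}, i.e.\ for every pair of nonempty open $\cU,\cV\subset\cO(X)$ there is $j$ with $C_j(\cV)\cap\cU\neq\emptyset$; moreover the universal functions then form a dense $G_\delta$. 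Thus it suffices to prove that transitivity is equivalent to the stated Runge condition, which I will call $(*)$.

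For the sufficiency $(*)\Rightarrow\text{transitivity}$, I would reduce to basic neighborhoods $\cV=\{f:\|f-a\|_{L_1}<\varepsilon\}$ and $\cU=\{f:\|f-b\|_{L_2}<\varepsilon\}$ with $a,b\in\cO(X)$ and $L_1,L_2$ compact; by the maximum principle $\|h\|_{\widehat L}=\|h\|_L$ for $h\in\cO(X)$ one may take $L_1,L_2$ to be $\cO(X)$-convex. Setting $L=\widehat{L_1\cup L_2}$ and applying $(*)$ yields $j$ with $\tau^j(L)\cap L=\emptyset$ and $\tau^j(L)\cup L$ $\cO(X)$-convex. On disjoint neighborhoods I define the germ equal to $a$ near $L$ and to $b\circ\tau^{-j}$ near $\tau^j(L)$; the Oka--Weil approximation theorem (valid on reduced Stein spaces) on the $\cO(X)$-convex set $L\cup\tau^j(L)$ produces a global $F\in\cO(X)$ approximating this germ, whence $F\in\cV$ and $C_j(F)=F\circ\tau^j\in\cU$. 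This direction is routine modulo Oka--Weil, and it is the one actually used in Theorem \ref{thm:univcvx}.

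For the necessity $\text{transitivity}\Rightarrow(*)$, fix an $\cO(X)$-convex compact $K$. The disjointness clause is easy: apply transitivity to $\cV=\{\|f\|_K<1\}$ and $\cU=\{\|f-C\|_K<1\}$ with $C$ large; a connecting pair $(j,F)$ satisfies $|F|<1$ on $K$ and $|F|>C-1$ on $\tau^j(K)$, forcing $\tau^j(K)\cap K=\emptyset$. The $\cO(X)$-convexity of $\tau^j(K)\cup K$ is the main obstacle. The mechanism I would exploit is the converse of Oka--Weil: if $\tau^j(K)\cup K$ is not $\cO(X)$-convex, there is a continuous linear functional (a ``period'') on germs near $\tau^j(K)\cup K$ that annihilates $\cO(X)$ but not a suitable germ, and transporting such a germ through $\tau^{-j}$ yields a target on $K$ whose period content is incompatible with the smallness imposed by $\cV$. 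The clean model is $X=\C\setminus\{0\}$, $\tau(z)=2z$, and $K$ an annulus: the residue functional obeys $a_{-1}(F\circ\tau^j)=2^{-j}a_{-1}(F)$ while $\{a_{-1}(g):g\in\cO(\C\setminus\{0\})\}=\C$, so the orbit $\{F\circ\tau^j\}$ cannot be dense. This shows convexity is genuinely necessary, not a consequence of disjointness.

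The hard part will be making this period obstruction \emph{uniform in $j$} for a general Stein space: the holomorphic hull of $\tau^j(K)\cup K$, and hence the obstructing functional, vary with $j$, and one must defeat all $j$ with a single choice of $\cU,\cV$. Equivalently, one must show that the runaway disjointness ``for every $\cO(X)$-convex compact'' already forces some iterate into Runge position. This is precisely the content of Zaj\c{a}c's argument \cite{Zaj}, which I would follow for this clause. Once both directions of the equivalence $\text{transitivity}\iff(*)$ are established, Birkhoff's theorem gives the statement and, as a bonus, shows that the $\cO(X)$-universal functions are generic in $\cO(X)$.
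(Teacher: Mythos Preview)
Your proposal and the paper's treatment agree in the large: both defer the body of the argument to Zaj\c{a}c \cite{Zaj}. The paper does not reprove the theorem; it simply observes that Zaj\c{a}c's proof for connected Stein manifolds goes through verbatim for arbitrary Stein spaces once one has the separation criterion of Proposition~\ref{prop:disjoint} (Zaj\c{a}c's Lemma~2) in that generality. Your sufficiency argument via Birkhoff transitivity and Oka--Weil is correct and is exactly what underlies Zaj\c{a}c's proof.

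Where your sketch diverges is in the \emph{mechanism} for necessity. The ``period obstruction'' idea---finding a functional annihilating $\cO(X)$ that witnesses non-Runge-ness and making it uniform in $j$---is not what Zaj\c{a}c does, and it is more complicated than needed. The actual argument, and the reason Proposition~\ref{prop:disjoint} is singled out, is this: given a universal $F$ and an $\cO(X)$-convex $K$, choose $j_1$ with $\|F\circ\tau^{j_1}\|_K<\tfrac12$ and then $j_2>j_1$ with $\|F\circ\tau^{j_2}-2\|_K<\tfrac12$. With $j=j_2-j_1$ the single function $g=F\circ\tau^{j_1}\in\cO(X)$ maps $K$ into $\{|z|<\tfrac12\}$ and $\tau^j(K)$ into $\{|z-2|<\tfrac12\}$, so $\widehat{g(K)}_{\cO(\C)}\cap\widehat{g(\tau^j(K))}_{\cO(\C)}=\emptyset$. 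Proposition~\ref{prop:disjoint} then gives both disjointness and $\cO(X)$-convexity of $K\cup\tau^j(K)$ at once. There is no uniformity-in-$j$ issue to overcome; the universal function itself supplies the separating map. Since you ultimately cite Zaj\c{a}c for this clause your proof is not wrong, but you should replace the period-functional heuristic with this direct use of Proposition~\ref{prop:disjoint}, which is precisely the lemma the paper isolates as the crux.
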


For a connected Stein manifold, the above theorem is due to Zaj\c{a}c \cite{Zaj}.
For arbitrary Stein space, this follows immediately from the following generalization of Lemma 2 in \cite{Zaj} to a Stein space (see the proof of Theorem 6 in \cite{Zaj}).
We may prove the following by choosing a relatively compact open Runge neighborhood of $\widehat K_{\cO(X)}\cup\widehat L_{\cO(X)}$ and embedding it to a Euclidean space, but we omit the details (see the proof of Lemma 2 in \cite{Zaj}).

\begin{prop}[{cf. \cite[Lemma 2]{Zaj}}]\label{prop:disjoint}
Let $X$ be a Stein space.
For compact subsets $K,L\subset X$, the following are equivalent:
\begin{enumerate}
\item There exists $f\in\cO(X)$ such that $\widehat{f(K)}_{\cO(\C)}\cap\widehat{f(L)}_{\cO(\C)}=\emptyset$.
\item $\widehat K_{\cO(X)}\cap\widehat L_{\cO(X)}=\emptyset$ and $\widehat{(K\cup L)}_{\cO(X)}=\widehat K_{\cO(X)}\cup\widehat L_{\cO(X)}$.
\end{enumerate}
In particular, if $K$ and $L$ are disjoint and $\cO(X)$-convex, then $K\cup L$ is $\cO(X)$-convex if and only if $K$ and $L$ satisfy the condition $(1)$.
\end{prop}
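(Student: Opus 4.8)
The plan is to prove the two implications separately, using throughout the functoriality of holomorphic hulls together with the Oka--Weil approximation theorem on the Stein space $X$. (Equivalently, as the authors suggest, one could embed a relatively compact Runge neighborhood of $\widehat K_{\cO(X)}\cup\widehat L_{\cO(X)}$ into some $\C^N$ and argue with polynomial hulls, but the intrinsic formulation keeps the bookkeeping lighter.) The basic observation I will use repeatedly is that for any $g\in\cO(X)$ and any compact $S\subset X$ one has $g(\widehat S_{\cO(X)})\subset\widehat{g(S)}_{\cO(\C)}$, since $p\circ g\in\cO(X)$ for every polynomial $p$ and $\sup_S|p\circ g|=\sup_{g(S)}|p|$.

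First I would treat $(1)\Rightarrow(2)$. Applying the observation to the separating function $f$ gives $f(\widehat K_{\cO(X)})\subset\widehat{f(K)}_{\cO(\C)}$ and $f(\widehat L_{\cO(X)})\subset\widehat{f(L)}_{\cO(\C)}$; as the two target hulls are disjoint, so are $\widehat K_{\cO(X)}$ and $\widehat L_{\cO(X)}$, which is the first half of $(2)$. For the union equality, write $P=\widehat{f(K)}_{\cO(\C)}$ and $Q=\widehat{f(L)}_{\cO(\C)}$. Here I invoke the plane-topology fact that two disjoint compact polynomially convex sets in $\C$ have polynomially convex union (each has connected complement, and by Alexander duality the disjoint union still has vanishing first \v{C}ech cohomology), so $\widehat{f(K)\cup f(L)}_{\cO(\C)}=P\cup Q$. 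Then the locally constant function $\phi$ equal to $0$ near $P$ and $1$ near $Q$ is holomorphic on a neighborhood of $P\cup Q$, and its pullback $\psi=\phi\circ f$ is a genuine idempotent, valued in $\{0,1\}$, on the open set $U=f^{-1}$ of that neighborhood. Since $f$ maps $\widehat{(K\cup L)}_{\cO(X)}$ into $\widehat{f(K)\cup f(L)}_{\cO(\C)}=P\cup Q$, the whole hull lies in $U$, with $\widehat K_{\cO(X)}\subset\{\psi=0\}$ and $\widehat L_{\cO(X)}\subset\{\psi=1\}$.

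The decisive step is the reverse inclusion $\widehat{(K\cup L)}_{\cO(X)}\subset\widehat K_{\cO(X)}\cup\widehat L_{\cO(X)}$. Take $x$ in the hull with, say, $\psi(x)=0$, and suppose $x\notin\widehat K_{\cO(X)}$. Choose $h\in\cO(X)$ with $|h(x)|>\sup_K|h|$; after raising to a high power and rescaling I may assume $\sup_K|h|<\eta$ while $|h(x)|\ge1$, for a small $\eta$ to be fixed. The trick is that $H:=h\cdot(1-\psi)\in\cO(U)$ equals $h$ on the clopen piece containing $K$ and vanishes on the piece containing $L$; because $\psi$ is a true idempotent this holds \emph{exactly}, with no exponential growth. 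Approximating $H$ uniformly on the $\cO(X)$-convex hull by a global $G\in\cO(X)$ within $\eta$ yields $|G|<2\eta$ on $K$, $|G|<\eta$ on $L$, yet $|G(x)|\ge1-\eta$; since $x\in\widehat{(K\cup L)}_{\cO(X)}$ this forces $1-\eta\le2\eta$, a contradiction once $\eta<1/3$. Hence $x\in\widehat K_{\cO(X)}$, and symmetrically the part with $\psi=1$ lies in $\widehat L_{\cO(X)}$. I expect this exact-idempotent Oka--Weil argument to be the main obstacle: the naive approach with an only approximately idempotent global function fails because powers of $1-e$ would grow on $K$, and the genuine idempotent $\psi$ is precisely what removes that growth.

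For $(2)\Rightarrow(1)$ the situation is easier: now $\widehat{(K\cup L)}_{\cO(X)}=\widehat K_{\cO(X)}\sqcup\widehat L_{\cO(X)}$ is $\cO(X)$-convex, so the locally constant function $\phi$ equal to $0$ near $\widehat K_{\cO(X)}$ and $1$ near $\widehat L_{\cO(X)}$ is holomorphic on a neighborhood of this hull. Oka--Weil produces $f\in\cO(X)$ with $\sup_{\widehat{(K\cup L)}_{\cO(X)}}|f-\phi|<1/3$, whence $f(K)$ and $f(L)$ lie in the disjoint discs $\overline{D(0,1/3)}$ and $\overline{D(1,1/3)}$; their polynomial hulls remain in these discs, giving $(1)$. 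Finally the ``in particular'' clause is immediate: when $K$ and $L$ are disjoint and $\cO(X)$-convex one has $\widehat K_{\cO(X)}=K$ and $\widehat L_{\cO(X)}=L$, so $(2)$ collapses to the single requirement that $K\cup L$ be $\cO(X)$-convex, and the established equivalence $(1)\Leftrightarrow(2)$ specializes to the stated one.
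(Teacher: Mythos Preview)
Your argument is correct. Note, however, that the paper does not actually prove this proposition: it only records the one-line strategy of choosing a relatively compact Runge open neighborhood of $\widehat K_{\cO(X)}\cup\widehat L_{\cO(X)}$, embedding it into some $\C^N$, and then invoking Zaj\c{a}c's Lemma~2 for polynomial hulls in Euclidean space. Your intrinsic route---pulling back a genuine $\{0,1\}$-valued idempotent $\psi=\phi\circ f$ from the plane and running Oka--Weil directly on $X$---is a complete and clean way to fill in those omitted details; it simply carries out on $X$ what the embedding approach would do after passing to $\C^N$. The decisive trick (multiplying a peaking function by the exact idempotent $1-\psi$ to kill it on $L$ before approximating globally) is the same idea underlying Zaj\c{a}c's proof, so the two approaches are really the same argument with different bookkeeping; yours has the minor advantage of avoiding the embedding step and thereby working uniformly for singular Stein spaces without further comment.
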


Following Theorem \ref{thm:Zaj}, we give the definition of a generalized translation.

\begin{defn}\label{defn:gentransl}
Let $X$ be a Stein space.
We call $\tau\in\Aut X$ a {\it generalized translation} if for any compact $\cO(X)$-convex subset $K\subset X$ there exists $j\in\N$ such that
\begin{enumerate}
\item $\tau^j(K)\cap K=\emptyset$, and
\item $\tau^j(K)\cup K$ is $\cO(X)$-convex.
\end{enumerate}
\end{defn}

The following proposition gives a lot of examples of generalized translations.

\begin{prop}\label{prop:gentransl}
Let $S$ and $X$ be Stein spaces and $f:S\to X$ a holomorphic map.
Assume that $\tau\in\Aut X$ is a generalized translation and $\sigma\in\Aut S$ is an automorphism such that $\tau\circ f=f\circ\sigma$.
Then $\sigma$ is also a generalized translation.
\end{prop}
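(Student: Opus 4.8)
The plan is to verify the two defining conditions of Definition \ref{defn:gentransl} for $\sigma$ by transporting, along $f$, the corresponding properties already known for $\tau$. The decisive tool will be Proposition \ref{prop:disjoint}: rather than working directly with $\cO$-convexity of unions (which is not preserved under pullback by an arbitrary holomorphic map), I would phrase everything through the equivalent condition $(1)$ there, namely \emph{separation of the $\cO(\C)$-hulls by a single function}. A separating function on $X$ can be transported to $S$ simply by precomposing with $f$, which is exactly why this formulation is the right one.

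Concretely, I would fix a compact $\cO(S)$-convex subset $K\subset S$ and first record that each $\sigma^j(K)$ is again $\cO(S)$-convex, since $\sigma^j\in\Aut S$ and precomposition by $\sigma^j$ permutes $\cO(S)$, so the hull operator commutes with $\sigma^j$. It then suffices to produce $j\in\N$ and $h\in\cO(S)$ with $\widehat{h(K)}_{\cO(\C)}\cap\widehat{h(\sigma^j(K))}_{\cO(\C)}=\emptyset$, because condition $(1)\Rightarrow(2)$ of Proposition \ref{prop:disjoint} would then give $\widehat{K}_{\cO(S)}\cap\widehat{\sigma^j(K)}_{\cO(S)}=\emptyset$ and $\widehat{K\cup\sigma^j(K)}_{\cO(S)}=\widehat{K}_{\cO(S)}\cup\widehat{\sigma^j(K)}_{\cO(S)}$; as both sets are $\cO(S)$-convex, these read $K\cap\sigma^j(K)=\emptyset$ and $K\cup\sigma^j(K)$ is $\cO(S)$-convex, which are precisely the two conditions required of $\sigma$. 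To build $h$ and $j$, I would pass to $L=\widehat{f(K)}_{\cO(X)}$, which is $\cO(X)$-convex, and apply the generalized-translation hypothesis to $L$ to obtain $j$ with $\tau^j(L)\cap L=\emptyset$ and $\tau^j(L)\cup L$ $\cO(X)$-convex. Proposition \ref{prop:disjoint} then yields $g\in\cO(X)$ with $\widehat{g(L)}_{\cO(\C)}\cap\widehat{g(\tau^j(L))}_{\cO(\C)}=\emptyset$, and I would set $h=g\circ f\in\cO(S)$. Iterating $\tau\circ f=f\circ\sigma$ gives $\tau^j\circ f=f\circ\sigma^j$, hence $f(\sigma^j(K))=\tau^j(f(K))\subset\tau^j(L)$ while $f(K)\subset L$; applying $g$ and using monotonicity and idempotency of the $\cO(\C)$-hull, one gets $\widehat{h(K)}_{\cO(\C)}\subset\widehat{g(L)}_{\cO(\C)}$ and $\widehat{h(\sigma^j(K))}_{\cO(\C)}\subset\widehat{g(\tau^j(L))}_{\cO(\C)}$, so the two are disjoint and the argument closes.

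The step I expect to be the crux is not any single computation but the structural choice just described: the generalized-translation hypothesis is available only for $\cO(X)$-convex sets, so one must replace $f(K)$ by its hull $L$ before invoking it, and it is the intertwining relation $\tau^j\circ f=f\circ\sigma^j$ that guarantees $f(\sigma^j(K))\subset\tau^j(L)$, allowing the separating function $g$ to descend to $h=g\circ f$ on $S$. Once the problem is routed through the separating-function characterization of Proposition \ref{prop:disjoint}, the remaining verifications are the elementary monotonicity and idempotency properties of the $\cO(\C)$-hull, so no further obstacle should arise.
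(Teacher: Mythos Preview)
Your proof is correct and follows essentially the same approach as the paper: pass to $L=\widehat{f(K)}_{\cO(X)}$, apply the generalized-translation hypothesis to $L$, use the intertwining $\tau^j\circ f=f\circ\sigma^j$, and invoke Proposition~\ref{prop:disjoint}. The only cosmetic difference is that the paper reads off $\sigma^j(K)\cap K=\emptyset$ directly from $\tau^j(L)\cap L=\emptyset$ (since $f(\sigma^j(K))\subset\tau^j(L)$ and $f(K)\subset L$) and reserves Proposition~\ref{prop:disjoint} for the $\cO(S)$-convexity of the union, whereas you route both conclusions through the separating-function characterization at once.
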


\begin{proof}
Let $K\subset S$ be a compact $\cO(S)$-convex subset and let $L=\widehat{f(K)}_{\cO(X)}$.
Since $\tau$ is a generalized translation, there exists $j\in\N$ such that $\tau^j(L)\cap L=\emptyset$ and $\tau^j(L)\cup L$ is $\cO(X)$-convex.
By assumption, it follows that $f(\sigma^j(K))\cap f(K)=\emptyset$.
Thus $\sigma^j(K)\cap K=\emptyset$.
The $\cO(S)$-convexity of $\sigma^j(K)\cup K$ follows easily from Proposition \ref{prop:disjoint}.
\end{proof}

We may obtain the following corollary by considering the projection map and the inclusion map.

\begin{cor}\label{cor:gentransl}
Let $X$ be a Stein space and $\tau\in\Aut X$ a generalized translation.
\begin{enumerate}
\item If $Y$ is a Stein space and $\sigma\in\Aut (X\times Y)$ is of the form $\sigma(x,y)=(\tau(x),f(x,y))$, then $\sigma$ is a generalized translation.
\item If $\Omega\subset X$ is a Stein subset such that $\tau(\Omega)=\Omega$, then $\tau|_\Omega\in\Aut\Omega$ is a generalized translation.
\end{enumerate}
\end{cor}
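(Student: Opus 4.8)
The plan is to derive both statements from Proposition \ref{prop:gentransl} by exhibiting, in each case, a holomorphic map that intertwines $\tau$ with the automorphism in question. Recall that Proposition \ref{prop:gentransl} asserts: if $f:S\to X$ is a holomorphic map of Stein spaces and $\sigma\in\Aut S$ satisfies $\tau\circ f=f\circ\sigma$ for a generalized translation $\tau\in\Aut X$, then $\sigma$ is itself a generalized translation. So for each part it suffices to produce a suitable Stein space $S$, a holomorphic map into $X$, and to verify the intertwining identity; the substantive work on convex hulls has already been done in the proposition.

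For part (1), I would take $S=X\times Y$, which is Stein as a product of Stein spaces, and let $p:X\times Y\to X$, $p(x,y)=x$, be the first projection. Since $\sigma(x,y)=(\tau(x),f(x,y))$, we have $p\circ\sigma(x,y)=\tau(x)=\tau\circ p(x,y)$, whence $\tau\circ p=p\circ\sigma$. Applying Proposition \ref{prop:gentransl} to the map $p$ and the automorphism $\sigma$ then yields that $\sigma$ is a generalized translation.

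For part (2), I would take $S=\Omega$, which is Stein by hypothesis, and let $\iota:\Omega\hookrightarrow X$ be the inclusion. From $\tau(\Omega)=\Omega$ it follows that $\tau$ restricts to a biholomorphism $\tau|_\Omega\in\Aut\Omega$, its inverse being $\tau^{-1}|_\Omega$, which is well defined because $\tau(\Omega)=\Omega$ also gives $\tau^{-1}(\Omega)=\Omega$. The intertwining identity $\tau\circ\iota=\iota\circ\tau|_\Omega$ holds trivially on $\Omega$, so Proposition \ref{prop:gentransl} with $\sigma=\tau|_\Omega$ gives the claim.

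I do not expect a genuine obstacle here: once the maps $p$ and $\iota$ are chosen, the intertwining identities are immediate, and the delicate interplay between $\cO(S)$-convexity and $\cO(X)$-convexity is entirely absorbed into Proposition \ref{prop:gentransl}. The only points warranting a word of care are the two standing Stein facts invoked above — that $X\times Y$ is Stein and that $\tau|_\Omega$ is indeed an automorphism of $\Omega$ — both of which are routine.
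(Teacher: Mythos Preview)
Your proposal is correct and follows exactly the route indicated in the paper: the paper's proof consists of the single sentence ``We may obtain the following corollary by considering the projection map and the inclusion map,'' and you have carried this out in full by applying Proposition~\ref{prop:gentransl} with $p:X\times Y\to X$ for part~(1) and $\iota:\Omega\hookrightarrow X$ for part~(2).
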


\begin{ex}\label{ex:gentransl}
A usual translation of $\C^n$ is clearly a generalized translation.
By Proposition 4.7, a ball $\B^n$ and a polydisc $\D^n$ have generalized translations (it also follows from Corollary \ref{cor:gentransl} and the Cayley transform $\B^n\cong\{(z',z_n)\in\C^n:\fI z_n>\|z'\|^2\}$).
Therefore, for any Stein space $X$, the product spaces $\C\times X$ and $\D\times X$ have generalized translations of the form $\tau=\sigma\times\id_X$ by Corollary \ref{cor:gentransl}.
\end{ex}

\subsection{Universal maps from bounded convex domains}

The following gives a simple characterization of generalized translations on bounded convex domains.
Recall that $\{\varphi_j\}_{j\in\N}\subset\Aut\Omega$ is said to be {\it compactly divergent} if for any compact subset $K\subset X$ there exists $N\in\N$ such that $\varphi_j(K)\cap K=\emptyset$ for all $j\geq N$.

\begin{prop}\label{prop:fixedpts}
Let $\Omega\Subset\C^n$ be a bounded convex domain.
For an automorphism $\tau\in\Aut\Omega$, the following are equivalent:
\begin{enumerate}
\item $\tau$ is a generalized translation.
\item $\tau$ has no fixed points.
\item $\{\tau^j\}_{j\in\N}$ is compactly divergent.
\end{enumerate}
\end{prop}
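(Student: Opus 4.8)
The plan is to prove the cycle of implications $(1)\Rightarrow(2)\Rightarrow(3)\Rightarrow(1)$. The implication $(1)\Rightarrow(2)$ is immediate: if $\tau$ fixed a point $p\in\Omega$, then the singleton $K=\{p\}$ is a compact $\cO(\Omega)$-convex set with $\tau^j(K)=K$ for every $j$, so $\tau^j(K)\cap K=K\neq\emptyset$ always, contradicting condition (1) in the definition of a generalized translation. Hence a generalized translation has no fixed point.

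For $(2)\Rightarrow(3)$ I would bring in the group structure of $\Aut\Omega$. Since $\Omega$ is a bounded domain, a theorem of H.~Cartan says that $\Aut\Omega$ is a real Lie group acting \emph{properly} on $\Omega$; concretely, for every compact $K\subset\Omega$ the set $\Sigma_K=\{g\in\Aut\Omega:g(K)\cap K\neq\emptyset\}$ is compact. I would then split on whether the cyclic group $\langle\tau\rangle\subset\Aut\Omega$ is discrete. If $\langle\tau\rangle$ is non-discrete, its closure $\overline{\langle\tau\rangle}$ is a non-discrete closed monothetic subgroup of a Lie group and hence compact; if $\langle\tau\rangle$ is finite, then $\tau$ has finite order and $\overline{\langle\tau\rangle}=\langle\tau\rangle$ is again compact. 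In both cases a compact group of automorphisms of the bounded convex domain $\Omega$ has a common fixed point (Cartan's fixed-point theorem), contradicting (2). Therefore, under (2), the group $\langle\tau\rangle$ must be infinite and discrete. For a discrete subgroup the intersection $\Sigma_K\cap\langle\tau\rangle$ is finite, so $\{j\in\Z:\tau^j(K)\cap K\neq\emptyset\}$ is finite for each compact $K$; this is exactly the compact divergence of $\{\tau^j\}_{j\in\N}$.

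For $(3)\Rightarrow(1)$, fix a compact $\cO(\Omega)$-convex set $K$. Condition (1) of the definition is handed to us by compact divergence: $\tau^j(K)\cap K=\emptyset$ for all large $j$. The delicate point is condition (2), the $\cO(\Omega)$-convexity of $\tau^j(K)\cup K$ for a suitable $j$, and here I would exploit the convexity of $\Omega$ together with Proposition~\ref{prop:disjoint}. Since $\Omega$ is a bounded convex (hence Runge) domain, $\cO(\Omega)$-convexity coincides with polynomial convexity, and $\tau^j(K)$ is again $\cO(\Omega)$-convex because automorphisms transform holomorphic hulls equivariantly. By Montel's theorem the bounded family $\{\tau^j\}$ is normal, so some subsequence $\tau^{j_k}$ converges locally uniformly to a holomorphic map $h\colon\Omega\to\overline\Omega$, and compact divergence forces $h(\Omega)\subset\partial\Omega$. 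Choosing a point $h(z_0)\in\partial\Omega$ and a complex-linear functional $\ell$ whose real part cuts out a supporting hyperplane $\{\operatorname{Re}\ell=c\}$ of $\Omega$ at $h(z_0)$, the maximum principle applied to the harmonic function $\operatorname{Re}(\ell\circ h)$ shows that $\ell\circ h$ is constant; thus $h(K)\subset\{\operatorname{Re}\ell=c\}$ while $\max_K\operatorname{Re}\ell<c$. Consequently, for large $k$ the function $\operatorname{Re}\ell$ separates $K$ from $\tau^{j_k}(K)$ with a gap, so $\widehat{\ell(K)}_{\cO(\C)}$ and $\widehat{\ell(\tau^{j_k}(K))}_{\cO(\C)}$ are disjoint, and Proposition~\ref{prop:disjoint} (with $f=\ell|_\Omega$) yields that $\tau^{j_k}(K)\cup K$ is $\cO(\Omega)$-convex. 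Hence $\tau$ is a generalized translation.

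The main obstacle is the implication $(2)\Rightarrow(3)$: passing from the purely qualitative hypothesis ``no fixed point'' to the quantitative statement of compact divergence requires the properness of the $\Aut\Omega$-action and the exclusion of relatively compact cyclic groups via the existence of fixed points for compact automorphism groups. The separation argument in $(3)\Rightarrow(1)$ is the secondary subtlety, where some care is needed to upgrade the mere disjointness of $\tau^j(K)$ and $K$ to $\cO(\Omega)$-convexity of their union; the key is that any locally uniform limit of the iterates maps $\Omega$ into a single complex supporting hyperplane of $\partial\Omega$.
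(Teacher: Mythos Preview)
Your argument is correct, and it takes a genuinely different route from the paper's. For $(2)\Leftrightarrow(3)$ the paper simply quotes Abate's iteration theory on taut manifolds \cite[Theorem 2.4.20]{Aba}, whereas you prove it directly from the Lie-group structure of $\Aut\Omega$: properness of the action, the Weil/Halmos--Samelson dichotomy for locally compact monothetic groups, and the barycentric fixed-point theorem for compact groups on convex domains. For $(3)\Rightarrow(1)$ the paper argues \emph{indirectly through universal functions}: compact divergence feeds into Lemma~\ref{lem:cxuniv} (whose hypothesis is exactly (3), not (1)), and then the proof of Corollary~\ref{cor:ddomuniv} together with Theorem~\ref{thm:ddom} manufactures an $\cO(\Omega)$-universal function, which by Theorem~\ref{thm:Zaj} forces $\tau$ to be a generalized translation. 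Your route is instead purely geometric: a normal-family limit lands in $\partial\Omega$, a supporting complex hyperplane makes $\ell\circ h$ constant by the maximum principle, and the resulting real-part gap lets you invoke Proposition~\ref{prop:disjoint} to certify $\cO(\Omega)$-convexity of $K\cup\tau^{j_k}(K)$. Your approach is self-contained and avoids both the citation to \cite{Aba} and the paper's heavier $\D$-dominability machinery; the paper's approach, on the other hand, highlights that Proposition~\ref{prop:fixedpts} is already a consequence of the universal-map theory being developed, and explains the parenthetical pointer to \cite[Theorem~11]{Zaj}.
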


\begin{proof}
It is obvious that $(1)\implies(2)$.
Since $\Omega$ is a bounded convex domain, $(2)\iff(3)$ (cf. \cite[Theorem 2.4.20]{Aba}).
The implication $(3)\implies(1)$ follows from Theorem \ref{thm:ddom} and the proof of Corollary \ref{cor:ddomuniv} below (see also \cite[Theorem 11]{Zaj}).
\end{proof}

To prove Theorem \ref{thm:univcvx}, we shall need the following lemma.
A complex space $X$ is said to be {\it $c$-finitely compact} if $X$ is Carath\'{e}odory hyperbolic and every closed ball (with respect to the Carath\'{e}odory distance $c_X$) in $X$ is compact.
It is known that a $c$-finitely compact space is Stein (cf. \cite[Corollary 4.4.7]{Kob}).

\begin{lem}\label{lem:cxuniv}
Let $X$ be a $c$-finitely compact space, $\Omega\Subset\C^n$ a bounded convex domain and $\tau\in\Aut X$ be an automorphism such that $\{\tau^j\}_{j\in\N}$ is compactly divergent.
Then there exists an $\cO(X,\Omega)$-universal map for $\tau$.
\end{lem}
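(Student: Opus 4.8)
The plan is to realize the $\cO(X,\Omega)$-universal maps for $\tau$ as the points with dense forward orbit under the precomposition homeomorphism $\tau^*:\cO(X,\Omega)\to\cO(X,\Omega)$, $f\mapsto f\circ\tau$, and to produce them via the Birkhoff transitivity theorem. Fixing a countable base $\{\cU_m\}_{m\in\N}$ of nonempty open subsets of $\cO(X,\Omega)$, the set of universal maps is exactly $\bigcap_m\bigcup_{j\in\N}(\tau^*)^{-j}(\cU_m)$, a $G_\delta$ set. Since $\cO(X,\Omega)$ is a separable Baire space with no isolated points (Remark \ref{rem:intro}), the Baire category theorem reduces the existence of a universal map (indeed of a residual set of them) to showing that each open set $\bigcup_j(\tau^*)^{-j}(\cU_m)$ is dense, i.e. that $\tau^*$ is \emph{topologically transitive}: for every pair of nonempty open $\cU,\cV\subset\cO(X,\Omega)$ there are $j\in\N$ and $f\in\cU$ with $f\circ\tau^j\in\cV$.

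To verify transitivity I pass to basic neighborhoods. Enlarging compacts and replacing any compact by its $\cO(X)$-convex hull (compact, as $X$ is Stein), it suffices to find, for a given $\cO(X)$-convex compact $K\subset X$, given $g_0,h_0\in\cO(X,\Omega)$ and $\varepsilon>0$, some $j$ and some $f\in\cO(X,\Omega)$ with $\sup_K|f-g_0|<\varepsilon$ and $\sup_{\tau^j(K)}|f-h_0\circ\tau^{-j}|<\varepsilon$. The first step is geometric. Because $X$ is $c$-finitely compact, the closed Carath\'eodory balls are compact, exhaust $X$, and are $\cO(X)$-convex (a point outside a ball is separated from it by the modulus of a suitable $\D$-valued function, directly from the definition of $c_X$). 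Enclosing $K$ in such a ball and using that $\tau$ is a $c_X$-isometry with $\{\tau^j\}$ compactly divergent, I can choose $j$ so large that $K$ and $\tau^j(K)$ are disjoint, that $K\cup\tau^j(K)$ is $\cO(X)$-convex (Proposition \ref{prop:disjoint}, applied after separating the two enclosing balls by a $\D$-valued function $\phi$), and moreover that the images $\phi(K)$ and $\phi(\tau^j(K))$ lie in small discs $D_1,D_2\Subset\D$ whose Poincar\'e distance is as large as we wish. This quantitative separation is the gain furnished by compact divergence, and it simultaneously shows that $\tau$ is a generalized translation.

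The heart of the matter, and the step I expect to be the main obstacle, is to produce the single \emph{global} holomorphic map $f$ that takes values in the \emph{bounded} domain $\Omega$. For a scalar target this is immediate from the Oka--Weil theorem, but here an Oka--Weil approximation of the prescribed local data will in general leave $\Omega$ away from $K\cup\tau^j(K)$, and this cannot be corrected by any holomorphic convex-combination trick (a holomorphic coefficient valued in $[0,1]$ is constant, which is precisely the Liouville-type obstruction ruled out by the hypothesis that $X$ be $c$-finitely compact rather than, say, all of $\C^n$). The resolution combines the convexity of $\Omega$ with the abundance of bounded holomorphic functions on $X$. The relevant target values $g_0(K)$ and $(h_0\circ\tau^{-j})(\tau^j(K))$ lie in a fixed compact subset $\Omega'\Subset\Omega$, so the Carath\'eodory distances in $\Omega$ between them are uniformly bounded; since the Poincar\'e gap between $D_1$ and $D_2$ was made large, the Schwarz--Pick obstruction to interpolation disappears. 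One then chooses, holomorphically in $x\in X$, a family of holomorphic discs $\D\to\Omega$ that is nearly constant equal to $g_0(x)$ over $D_1$ and to $(h_0\circ\tau^{-j})(x)$ over $D_2$ while remaining inside $\Omega$, glued along the level sets of $\phi$ by means of a holomorphic cutoff; evaluating this family over $\zeta=\phi(x)$ defines a global map $f:X\to\Omega$ with $f(x)\in\Omega$ for every $x$ and with the two required approximation properties. This establishes the topological transitivity of $\tau^*$ and hence the existence of an $\cO(X,\Omega)$-universal map for $\tau$.
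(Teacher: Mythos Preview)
Your overall strategy coincides with the paper's: reduce to Birkhoff transitivity of $\tau^*$, use $c$-finite compactness to produce a holomorphic separating function $\phi:X\to\D$, and then build a single $\Omega$-valued map that interpolates between the two prescribed maps along the values of $\phi$. Where your proposal falls short is precisely the interpolation step, which you yourself flag as the main obstacle. The sentence ``glued along the level sets of $\phi$ by means of a holomorphic cutoff'' is not a construction: you have just explained two lines earlier that a holomorphic function with values in $[0,1]$ is constant, so there is no holomorphic cutoff in the naive sense, and you do not say what replaces it. The Schwarz--Pick heuristic (large Poincar\'e separation of $D_1,D_2$ removes the obstruction to two-point interpolation in $\Omega$) is correct, but it only gives, for each fixed $x$, \emph{some} disc $\D\to\Omega$ hitting $g_0(x)$ and $h_0(\tau^{-j}(x))$; it does not produce a disc that is \emph{nearly constant} on the sets $D_1,D_2$ (whose Poincar\'e diameters are bounded but not small), nor does it give the required holomorphic dependence on $x$.

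The paper fills this gap by invoking Theorem~\ref{thm:ddom}: since $\cO(\Omega^2,\Omega)$ is $\D$-dominated, there is a holomorphic map $f:\overline\D\to\cO(\Omega^2,\Omega)$ with $f(1)$ close to the first projection $\pi_1$ and $f(-1)$ close to $\pi_2$. One then shows, via Montel and the maximum principle, that the separating functions $\varphi_k:X\to\D$ can be chosen so that $\varphi_k\to-1$ and $\varphi_k\circ\tau^{j_k}\to1$ uniformly on compacta (not merely that $\varphi_k(K)$ and $\varphi_k(\tau^{j_k}(K))$ are far apart), and sets $F_k(x)=f(\varphi_k(x))\bigl(u(x),v(\tau^{-j_k}(x))\bigr)$. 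The passage to boundary values of $f$ on $\overline\D$ is what converts ``large Poincar\'e distance'' into the needed uniform approximation. Your approach can in fact be completed without Theorem~\ref{thm:ddom}: after reducing to $g_0(X),h_0(X)\subset\Omega'\Subset\Omega$, one checks that the set $N=\{w\in\C:(1-w)\Omega'+w\Omega'\subset\Omega\}$ is an open convex neighbourhood of $[0,1]$, constructs an explicit holomorphic $\chi_0:\overline\D\to N$ with $\chi_0(-1)=0$, $\chi_0(1)=1$ (e.g.\ via a Riemann map of a thin rectangle in $N$), and takes $f=(1-\chi_0\circ\varphi_k)\,g_0+(\chi_0\circ\varphi_k)\,(h_0\circ\tau^{-j_k})$. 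But none of this is in your write-up; as it stands, the crucial step is a promissory note rather than a proof.
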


\begin{proof}
By Birkhoff's transitivity theorem (cf. \cite[Theorem 1.16]{GroPer}), it suffices to show that for any pair of nonempty open subsets $\cU,\cV\subset\cO(X,\Omega)$ there exists $j\in\N$ such that $(\tau^*)^j(\cU)\cap\cV\neq\emptyset$.
Take $u\in\cU,\ v\in\cV$ with relatively compact images $u(X),v(X)\Subset\Omega$.
Since $\Omega$ is taut, there exists a subsequence $\{\tau^{j_k}\}_{k\in\N}$ of $\{\tau^j\}$ such that $\{u\circ\tau^{j_k}\}_{k\in\N}$ and $\{v\circ\tau^{-j_k}\}_{k\in\N}$ have limits $\widetilde u,\widetilde v\in\cO(X,\Omega)$, respectively.
For $j=1,2$, we denote by $\pi_j$ the projection map $\Omega^2\to\Omega,\ (x_1,x_2)\mapsto x_j$.
Since $\pi_1(u,\widetilde v)=u\in\cU,\ \pi_2(\widetilde u,v)=v\in\cV$,  there exists a holomorphic map $f:\overline\D\to\cO(\Omega^2,\Omega)$ such that $f(1)\circ(u,\widetilde v)\in\cU$ and $f(-1)\circ(\widetilde u,v)\in\cV$ by Theorem \ref{thm:ddom}.

Take $x\in X$.
Since $\{\tau^{j_k}\}_{k\in\N}$ is compactly divergent and $X$ is $c$-finitely compact, $c_X(x,\tau^{j_k}(x))\to\infty$ as $k\to\infty$.
Thus there exists a sequence $\{\varphi_k:\Omega\to\D\}_{k\in\N}$ of holomorphic maps such that $\varphi_k(x)\to -1$ and $\varphi_k\circ\tau^{j_k}(x)\to 1$.
By Montel's theorem and the maximum principle, passing to a subsequence if necessary, we may assume that $\varphi_k\to -1$ and $\varphi_k\circ\tau^{j_k}\to 1$.

For each $k\in\N$, let $F_k=\ev(f\circ\varphi_k,(u,v\circ\tau^{-j_k}))\in\cO(X,\Omega)$.
By our construction, $F_k\to f(1)\circ(u,\widetilde v)\in\cU$ and $F_k\circ\tau^{j_k}\to f(-1)\circ(\widetilde u,v)\in\cV$ as $k\to\infty$.
Thus it follows that $(\tau^*)^{j_k}(\cU)\cap\cV\neq\emptyset$ for any sufficiently large $k\in\N$.
\end{proof}

The following corollary and Theorem \ref{thm:ddom} imply Theorem \ref{thm:univcvx}.

\begin{cor}\label{cor:ddomuniv}
Let $\Omega\Subset\C^n$ be a bounded convex domain, $\tau\in\Aut\Omega$ a generalized translation and $Y$ a complex space.
Let $\cZ\subset\cO(\Omega,Y)$ be a $\tau^*$-invariant irreducible component (with respect to the Zariski topology, see Definition \ref{defn:zar}).
Then $\cZ$ is $\D$-dominated if and only if there exists a $\cZ$-universal map for $\tau$.
\end{cor}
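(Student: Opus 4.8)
The plan is to isolate a single \emph{transfer principle} and to deduce both implications from it: if some $g\in\cO(\Omega,Y)$ has the property that the closure of its orbit $\{g\circ\tau^j\}_{j\in\N}$ contains $\cZ$, then automatically $g\in\cZ$ and $\cZ$ is $\D$-dominated. Granting this, the forward implication is immediate (a $\cZ$-universal map has orbit closure exactly $\cZ$), while the reverse implication is reduced to \emph{constructing} one $g$ whose orbit fills up $\cZ$; the transfer principle then upgrades this $g$ to a genuine $\cZ$-universal map. Throughout I use that an irreducible component $\cZ$ for the Zariski topology of Definition \ref{defn:zar} is in particular closed, and that $\tau^*$ is a homeomorphism permuting the components, so that $\tau^*\cZ=\cZ$.

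To prove the transfer principle I would realize the orbit as values of a single holomorphic disc in $\cO(\Omega,Y)$. Assuming $0\in\Omega$, each $\tau^j$ is approximated on an exhausting sequence of compacta by $\tilde\tau_j(x):=\tau^j(r_jx)$ with $r_j\uparrow 1$; since $\tilde\tau_j$ extends holomorphically across $\overline{\Omega}$, it lies in $\cO(\overline{\Omega},\Omega)|_{\Omega}$, and I also include a sequence of such approximants of the inclusion. Proposition \ref{prop:ddom}, applied with the manifold target $\Omega$, then furnishes a holomorphic disc $R:\D\to\cO(\Omega,\Omega)$ whose image contains all the $\tilde\tau_j$. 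Setting $\Psi(\zeta):=g\circ R(\zeta)$ gives a holomorphic disc $\Psi:\D\to\cO(\Omega,Y)$ whose image contains $\{g\circ\tilde\tau_j\}$, a set whose closure is again all of $\cZ$ (using that tails of the orbit remain dense). Because $\D$ is irreducible, $\Psi(\D)$ is Zariski-irreducible and hence contained in a single component $\cZ'$; as $\cZ\subseteq\overline{\Psi(\D)}\subseteq\cZ'$ and both are components, $\cZ'=\cZ$. Thus $\Psi$ is a dense holomorphic disc in $\cZ$, so $\cZ$ is $\D$-dominated, and $g\in\overline{\Psi(\D)}=\cZ$.

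For the reverse implication I would build the required $g$ from the hypothesis. Fix a dense holomorphic disc $\Phi:\D\to\cZ$, i.e.\ a dense family of slices of $\widehat\Phi:\D\times\Omega\to Y$. The product $\D\times\Omega\Subset\C^{1+n}$ is again bounded convex, and by Proposition \ref{prop:fixedpts} the generalized translation $\tau$ is compactly divergent, so Lemma \ref{lem:cxuniv}, applied with the $c$-finitely compact source $\Omega$ and target $\D\times\Omega$, provides a map $\theta=(\theta_1,\theta_2)\in\cO(\Omega,\D\times\Omega)$ that is universal for $\tau$. I then set $F:=\widehat\Phi\circ\theta\in\cO(\Omega,Y)$. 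To see that the orbit of $F$ fills $\cZ$, fix $h\in\cZ$, a compact $K\subset\Omega$ and $\varepsilon>0$; choose $\zeta_h$ with $\Phi(\zeta_h)$ close to $h$ on $K$, and use the universality of $\theta$ to find $j$ with $\theta\circ\tau^j$ close on $K$ to the map $x\mapsto(\zeta_h,x)$. Then $\theta_1\circ\tau^j\approx\zeta_h$ and $\theta_2\circ\tau^j\approx\id$ on $K$, whence $F\circ\tau^j(x)=\widehat\Phi(\theta\circ\tau^j(x))\approx\widehat\Phi(\zeta_h,x)=\Phi(\zeta_h)(x)\approx h(x)$ on $K$. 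Applying the transfer principle to $F$ yields $F\in\cZ$, so $F$ is the desired $\cZ$-universal map.

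The main obstacle is the reverse implication, and specifically the failure of the target to be taut when $Y$ is an arbitrary complex space: one cannot, as in Lemma \ref{lem:cxuniv}, pass to limits of $u\circ\tau^{j}$ in order to neutralise the shift $\tau^j$ in the second variable. My resolution is to take the universal map \emph{into the product} $\D\times\Omega$ rather than into $\D$: its second component supplies a holomorphic change of variables with $\theta_2\circ\tau^j\approx\id$ on $K$, which keeps the second slot of $\widehat\Phi$ inside a fixed compact subset of $\Omega$ and thereby absorbs the shift, while the first component $\theta_1$ independently steers the disc parameter to $\zeta_h$. Verifying that these two effects combine to push $F\circ\tau^j$ onto an arbitrary slice $\Phi(\zeta_h)$ — so that $\D$-domination of $\cZ$ alone, fed through $\widehat\Phi$, already forces a universal map — is the crux of the argument.
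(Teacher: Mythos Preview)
Your argument is correct and follows essentially the same route as the paper: for the direction ``$\D$-dominated $\Rightarrow$ universal'' you take a universal map $\theta\in\cO(\Omega,\D\times\Omega)$ from Lemma~\ref{lem:cxuniv} and push it forward by $\widehat\Phi$, exactly as the paper does with $\widehat f_*(F)$; for the converse you postcompose a dense disc in $\cO(\Omega,\Omega)$ with the universal map, as the paper does via $F_*\circ f$. The paper's presentation is somewhat more streamlined---it avoids your explicit ``transfer principle'' by directly invoking the irreducibility of $\cO(\Omega,\D\times\Omega)$ and $\cO(\Omega,\Omega)$ (both from Theorem~\ref{thm:ddom}) to see at once that $\widehat f_*$ lands in $\cZ$ and has dense image, and then uses the commutation $\tau^*\circ\widehat f_*=\widehat f_*\circ\tau^*$ to transport universality in one line---but the underlying mechanism is identical.
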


\begin{proof}
Let us assume that $\cZ$ is $\D$-dominated, i.e. there exists a dense holomorphic disc $f:\D\to\cZ$.
Let $\widehat{f}:\D\times\Omega\to Y$ be the associated holomorphic map.
Since $\cO(\Omega,\D\times\Omega)$ is irreducible (it follows from Theorem \ref{thm:ddom}), it defines a dense holomorphic map $\widehat{f}_*:\cO(\Omega,\D\times\Omega)\to\cZ$.
Since the bounded convex domain $\Omega$ is $c$-finitely compact (cf. \cite[Corollary 4.1.10]{Kob}), there exists an $\cO(\Omega,\D\times\Omega)$-universal map $F$ by Lemma \ref{lem:cxuniv}.
Then its image $\widehat{f}_*(F)=F\circ f$ is a $\cZ$-universal map since $\tau^*\circ\widehat{f}_*=\widehat{f}_*\circ\tau^*$.

If there exists a $\cZ$-universal map $F:\Omega\to Y$, then a dense holomorphic disc $f:\D\to\cO(\Omega,\Omega)$ (see Theorem \ref{thm:ddom}) and the postcomposition map $F_*:\cO(\Omega,\Omega)\to\cZ$ give a dense holomorphic disc $F_*\circ f:\D\to\cZ$.
\end{proof}

\begin{rem}\label{rem:her}
From the proof of Lemma \ref{lem:cxuniv}, in the situation of Theorem \ref{thm:univcvx}, it also follows that the precomposition map $\tau^*:\cO(\Omega,Y)\to\cO(\Omega,Y)$ is {\it hereditarily hypercyclic}, i.e. for any subsequence $\{\tau^{j_k}\}_{k\in\N}$ of $\{\tau^j\}_{j\in\N}$ there exists $F\in\cO(\Omega,Y)$ such that $\{F\circ\tau^{j_k}\}_{k\in\N}$ is dense in $\cO(\Omega,Y)$ (see also \cite
[Theorem 14]{Zaj}).
\end{rem}

As an application, we may obtain the following stronger version of Theorem \ref{thm:acconn}.

\begin{cor}
Let $Y$ be a connected complex manifold.
Assume that for any bounded convex domain $\Omega\Subset\C^n$ there exists a nonempty open subset $\cW\subset\cO(\Omega,Y)$ satisfying the following property: for any pair of nonempty open subsets $\cU,\cV\subset W$ there exists an entire curve $f:\C\to\cO(\Omega,Y)$ such that $f(0)\in\cU,\ f(1)\in\cV$.
Then Y is Oka.
\end{cor}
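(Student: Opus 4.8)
The plan is to deduce the full hypothesis of Theorem \ref{thm:acconn} from the weaker assumption, i.e. to show that for \emph{every} bounded convex domain $\Omega$ and \emph{every} pair of nonempty open sets $\cU,\cV\subset\cO(\Omega,Y)$ there is an entire curve joining a point of $\cU$ to a point of $\cV$; by Theorem \ref{thm:acconn} this forces $Y$ to be Oka. Fix such $\Omega,\cU,\cV$. The difficulty is that the given open set $\cW$ may be disjoint from $\cU$ and $\cV$, so the idea is to transport $\cU$ and $\cV$ into $\cW$ by a hypercyclic precomposition operator, connect them inside $\cW$, and transport the connecting curve back.

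First I would pass to the product $\Omega'=\D\times\Omega\Subset\C^{n+1}$, which is again bounded convex; this detour is needed because $\Omega$ itself may carry no generalized translation. Fix a fixed-point-free $\sigma\in\Aut\D$, so that by Example \ref{ex:gentransl} the automorphism $\tau=\sigma\times\id_\Omega\in\Aut\Omega'$ is a generalized translation. By Theorem \ref{thm:univcvx} there is an $\cO(\Omega',Y)$-universal map for $\tau$, so $\tau^*$ is hypercyclic, and by Remark \ref{rem:her} it is hereditarily hypercyclic. Since $\cO(\Omega',Y)$ is a separable Baire space (Remark \ref{rem:intro}), Birkhoff's transitivity theorem applied along an arbitrary subsequence $\{\tau^{n_k}\}$ shows that the existence of an $F$ with $\{F\circ\tau^{n_k}\}$ dense is equivalent to the transitivity of $\tau^*$ along $\{n_k\}$; hence hereditary hypercyclicity is equivalent to topological mixing, namely for all nonempty open $\cP,\cQ\subset\cO(\Omega',Y)$ there is $N$ with $(\tau^*)^n(\cP)\cap\cQ\neq\emptyset$ for all $n\geq N$.

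Next I would relate $\cO(\Omega',Y)$ to $\cO(\Omega,Y)$ through the inclusion $\iota\colon\Omega\hookrightarrow\Omega',\ x\mapsto(0,x)$. The precomposition $\iota^*$ is continuous, sends entire curves to entire curves, and is surjective (split by $p^*$, where $p\colon\Omega'\to\Omega$ is the projection, since $\iota^*\circ p^*=\id$). Thus $\cV_1=(\iota^*)^{-1}(\cU)$ and $\cV_2=(\iota^*)^{-1}(\cV)$ are nonempty open subsets of $\cO(\Omega',Y)$. Let $\cW'\subset\cO(\Omega',Y)$ be the open set furnished by the hypothesis for $\Omega'$. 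Using mixing, I would choose a \emph{single} index $j$ large enough that both $\cA=(\tau^*)^j(\cV_1)\cap\cW'$ and $\cB=(\tau^*)^j(\cV_2)\cap\cW'$ are nonempty; they are open subsets of $\cW'$. The hypothesis on $\cW'$ then yields an entire curve $g\colon\C\to\cO(\Omega',Y)$ with $g(0)\in\cA$ and $g(1)\in\cB$.

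Finally I would transport $g$ back. Since $(\tau^*)^{-j}=(\tau^{-j})^*$ is precomposition by the automorphism $\tau^{-j}$, the map $h=\iota^*\circ(\tau^*)^{-j}\circ g\colon\C\to\cO(\Omega,Y)$ is again an entire curve, and from $g(0)\in\cA\subset(\tau^*)^j(\cV_1)$ and $g(1)\in\cB\subset(\tau^*)^j(\cV_2)$ one gets $(\tau^*)^{-j}(g(0))\in(\iota^*)^{-1}(\cU)$ and $(\tau^*)^{-j}(g(1))\in(\iota^*)^{-1}(\cV)$, whence $h(0)\in\cU$ and $h(1)\in\cV$. As $\Omega,\cU,\cV$ were arbitrary, this verifies the hypothesis of Theorem \ref{thm:acconn} and so $Y$ is Oka. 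The main obstacle is securing one common index $j$ for both $\cU$ and $\cV$: the transport and the preservation of holomorphicity under precomposition are purely formal, and the whole argument hinges on upgrading the hereditary hypercyclicity of Remark \ref{rem:her} to topological mixing.
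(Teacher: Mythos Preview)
Your argument is correct and follows essentially the same route as the paper: pass to $\D\times\Omega$, pull $\cU,\cV$ back along $\iota^*$, use the hereditary hypercyclicity of $\tau^*$ (Remark \ref{rem:her}) to obtain a single iterate $j$ that moves both preimages into the distinguished open set, apply the hypothesis there, and push the resulting entire curve back via $\iota^*$ composed with a power of $\tau^*$. The only difference is cosmetic---you explicitly upgrade hereditary hypercyclicity to topological mixing to secure the common $j$ (a step the paper leaves implicit), and you transport $\cV_1,\cV_2$ forward into $\cW'$ whereas the paper transports $\cW$ forward into $\cU',\cV'$, which merely swaps the sign of the exponent in the final precomposition.
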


\begin{proof}
We verify the condition in Theorem \ref{thm:acconn}.
Let $\Omega\Subset\C^n$ be a bounded convex domain and $\cU,\cV$ be a pair of nonempty open subsets of $\cO(\Omega,Y)$.
The inclusion map $\iota:\Omega\hookrightarrow\D\times\Omega,\ x\mapsto (0,x)$ induces the surjective precomposition map $\iota^*:\cO(\D\times\Omega,Y)\to\cO(\Omega,Y)$.
We write $\cU'=(\iota^*)^{-1}(\cU)$ and $\cV'=(\iota^*)^{-1}(\cV)$.

Note that $\D\times\Omega$ has a generalized translation $\tau\in\Aut(\D\times\Omega)$.
Let $\cW\subset\cO(\D\times\Omega,Y)$ be an open set as in the assumption.
By the hereditary hypercyclicity of $\tau^*:\cO(\D\times\Omega,Y)\to\cO(\D\times\Omega,Y)$ (see Remark \ref{rem:her}), we may find $j\in\N$ such that $(\tau^*)^j(\cW)\cap\cU'\neq\emptyset$ and $(\tau^*)^j(\cW)\cap\cV'\neq\emptyset$.
By assumption, there exists an entire curve $f:\C\to\cO(\Omega,Y)$ such that $f(0)\circ\tau^j\in\cU'$ and $f(1)\circ\tau^j\in\cV'$.
Then the entire curve $\widetilde f=(\tau^j\circ\iota)^*\circ f:\C\to\cO(\Omega,Y)$ satisfies $\widetilde f(0)\in\cU$ and $\widetilde f(1)\in\cV$.
\end{proof}

From the above corollary, it follows that a connected complex manifold $Y$ is Oka if and only if for any bounded convex domain $\Omega\Subset\C^n$ there exists a somewhere dense entire curve $\C\to\cO(\Omega,Y)$ (compare with Theorem \ref{thm:cdomoka}).

\subsection{Universal maps from general Stein spaces}

We shall show that the $\C$-dominability implies the existence of a universal map from a more general Stein space than bounded convex domains.
Theorem \ref{thm:cdomuniv} and Corollary \ref{cor:cdomuniv} are stated for path components, but the analogous ones hold for irreducible components.
At present, the author does not know whether irreducible components are path-connected.

\begin{thm}\label{thm:cdomuniv}
Let $X$ be a Stein space, $\tau\in\Aut X$ a generalized translation and $Y$ a complex space.
Let $\cP$ be a $\tau^*$-invariant path component of $\cO(X,Y)$ and assume that $\cP$ is $\C$-dominated.
Then there exists a $\cP$-universal map for $\tau$.
\end{thm}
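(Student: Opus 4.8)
The plan is to apply Birkhoff's transitivity theorem (cf. \cite[Theorem 1.16]{GroPer}) to the continuous self-map $\tau^*\colon\cP\to\cP$, for which the $\cP$-universal maps are exactly the elements with dense $\tau^*$-orbit. First I would record that $\tau^*$ is a homeomorphism of $\cO(X,Y)$ with inverse $(\tau^{-1})^*$, so it permutes path components; since $\tau^*\cP\subset\cP$ and both sides are path components, in fact $\tau^*\cP=\cP$ and $\tau^*|_\cP$ is a homeomorphism (so $(\tau^{-j})^*$ also preserves $\cP$). Writing $\{\cU_n\}_{n\in\N}$ for a countable base of $\cP$, the set of $\cP$-universal maps is $\bigcap_{n\in\N}\bigcup_{j\in\N}(\tau^*)^{-j}(\cU_n)$, so it suffices to prove that $\tau^*$ is topologically transitive on $\cP$: for every pair of nonempty open $\cU,\cV\subset\cP$ there is $j\in\N$ with $(\tau^*)^j(\cU)\cap\cV\neq\emptyset$. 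For the Baire category step one needs $\cP$ to be a separable Baire space; separability is clear since $\cP$ carries the dense entire curve, while the completely metrizable (hence Baire) property would follow once path components of $\cO(X,Y)$ are known to be open, and verifying this openness is the one soft point that must be addressed.

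For transitivity, fix nonempty open $\cU,\cV\subset\cP$. Replacing a defining compact set by its $\cO(X)$-convex hull, I would choose a compact $\cO(X)$-convex $K\subset X$, maps $u\in\cU$, $v\in\cV$ and $\varepsilon>0$ so that $\varepsilon$-closeness to $u$ (resp. $v$) on $K$ forces membership in $\cU$ (resp. $\cV$). Since $\tau$ is a generalized translation (Definition \ref{defn:gentransl}), there is $j\in\N$ with $\tau^j(K)\cap K=\emptyset$ and $\tau^j(K)\cup K$ both $\cO(X)$-convex (note $\tau^j(K)$ is $\cO(X)$-convex because $\tau^j\in\Aut X$). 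Let $h\colon\C\to\cP$ be the dense entire curve furnished by $\C$-dominability, with associated holomorphic map $\widehat h\colon\C\times X\to Y$. Using density of $h$, pick $a\in\C$ with $h(a)$ close to $u$ on $K$, and $b\in\C$ with $h(b)$ close to $v\circ\tau^{-j}$ on $\tau^j(K)$ (here $v\circ\tau^{-j}=(\tau^{-j})^*v\in\cP$). As $K$ and $\tau^j(K)$ are disjoint $\cO(X)$-convex compacta with $\cO(X)$-convex union, the Oka--Weil theorem yields $\psi\in\cO(X)$ close to $a$ on $K$ and to $b$ on $\tau^j(K)$.

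The candidate is $g=\ev(h\circ\psi,\id_X)\in\cO(X,Y)$, i.e. $g(x)=\widehat h(\psi(x),x)$, built from the evaluation/postcomposition structure of Remark \ref{rem:intro}. Uniform continuity of $\widehat h$ on a neighborhood of $\{a,b\}\times(K\cup\tau^j(K))$ absorbs the $\psi$-approximation, giving $g(x)\approx\widehat h(a,x)=h(a)(x)\approx u(x)$ on $K$ and, for $x\in K$, $g(\tau^j(x))\approx\widehat h(b,\tau^j(x))=h(b)(\tau^j(x))\approx(v\circ\tau^{-j})(\tau^j(x))=v(x)$. Moreover $g\in\cP$: the homotopy $g_s=\ev(h\circ(s\psi+(1-s)a),\id_X)$, $s\in[0,1]$, is a continuous path in $\cO(X,Y)$ from $g_0=h(a)\in\cP$ to $g_1=g$. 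Hence $g\in\cU$ and $g\circ\tau^j\in\cV$, so $(\tau^*)^j(\cU)\cap\cV\neq\emptyset$, completing the transitivity step.

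The crux, and the reason $\C$-dominability rather than a mere dense sequence is required, is the construction of a single holomorphic $g$ interpolating $u$ on $K$ and $v\circ\tau^{-j}$ on $\tau^j(K)$: the entire curve supplies a globally defined holomorphic family $\{h(\zeta)\}_{\zeta\in\C}$ through which the $\C$-valued separating function $\psi$ can navigate, while the generalized-translation hypothesis is precisely what makes $K$ and $\tau^j(K)$ disjoint with $\cO(X)$-convex union, so that Oka--Weil produces $\psi$ and the explicit path keeps $g$ inside the prescribed component $\cP$. I expect the main obstacle to be the soft-topology bookkeeping: ensuring Birkhoff's theorem genuinely applies to the path component $\cP$ (its Baire property), together with the confirmation that the constructed interpolant does not escape $\cP$.
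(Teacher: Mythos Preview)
Your transitivity step is essentially the paper's: the interpolant $g=\ev(h\circ\psi,\id_X)$ with $\psi\in\cO(X)$ produced by Oka--Weil from the generalized-translation hypothesis, together with the linear homotopy $s\mapsto\ev(h\circ(s\psi+(1-s)a),\id_X)$ placing $g$ in $\cP$, are exactly the ingredients the paper uses. The difference is where the Baire category step is carried out. You invoke Birkhoff on $\cP$ and correctly flag that this requires $\cP$ to be Baire, which in turn would follow from openness of path components in $\cO(X,Y)$; for a general complex space $Y$ this is not established in the paper and is the genuine soft spot in your outline.

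The paper sidesteps this by pushing the Baire argument into $\cO(X)$. Fixing a countable base $\{\cU_j\}$ of $\cP$, one sets
\[
\cV_{j,k}=\bigl\{\psi\in\cO(X):\ev(h\circ\psi,\id_X)\circ\tau^k\in\cU_j\bigr\},
\]
and your very construction (choose $a_{j,k}\in\C$ with $h(a_{j,k})$ close to the target on $\tau^k(K_j)$, then Oka--Weil) shows each $\bigcup_k\cV_{j,k}$ is open and dense in the Fr\'echet space $\cO(X)$. Baire then produces $\varphi\in\bigcap_j\bigcup_k\cV_{j,k}$, and $\ev(h\circ\varphi,\id_X)$ is the desired $\cP$-universal map; it lies in $\cP$ by the same homotopy you wrote down (equivalently, because $\psi\mapsto\ev(h\circ\psi,\id_X)$ is continuous on the connected space $\cO(X)$ and sends constants into $h(\C)\subset\cP$). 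This reorganization buys a Baire space for free and eliminates the one obstacle you identified; conversely, your Birkhoff formulation is more conceptual and, once openness of $\cP$ is available (e.g.\ when $Y$ is a manifold), immediately yields that $\cP$-universal maps are residual in $\cP$.
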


\begin{proof}
Let $f:\C\to\cP$ be a dense entire curve.
Choose a countable base $\{\cU_j\}_{j\in\N}$ for the topology of $\cP$ and a distance function $d$ of $X$.
Assume that $\cU_j\neq\emptyset$ for all $j\in\N$.
For each $j\in\N$, there exist a compact subset $K_j\subset X$, a holomorphic map $h_j\in\cP$ and $\varepsilon_j>0$ such that $\{g\in\cP:\sup_{K_j}d(g,h_j)<\varepsilon_j\}\subset\cU_j$.
For each $j,k\in\N$, we denote by $\cV_{j,k}$ the open set of holomorphic functions $g\in\cO(X)$ satisfying ${\ev}(f\circ g,\id_X)\in(\tau^*)^{-k}(\cU_j)$.
Note that, since $f:\C\to\cP$ has dense image, for each $j,k\in\N$ there exist $a_{j,k}\in\C$ and $\varepsilon_{j,k}>0$ such that $\{g\in\cO(X):\sup_{\tau^k(K_j)}|g-a_{j,k}|<\varepsilon_{j,k}\}\subset\cV_{j,k}$.

Let us show that $\bigcup_{k\in\N}\cV_{j,k}$ is dense in $\cO(X)$ for each $j\in\N$.
Take a holomorphic function $h\in\cO(X)$, a compact $\cO(X)$-convex subset $L\supset K_j$ of $X$ and $\varepsilon>0$.
By definition, there exists $k\in\N$ such that $\tau^k(L)\cap L=\emptyset$ and $\tau^k(L)\cup L$ is $\cO(X)$-convex.
By Oka-Weil theorem, there exists a holomorphic function $g\in\cO(X)$ such that
\begin{enumerate}
\renewcommand{\labelenumi}{(\alph{enumi})}
\item $\sup_L|g-h|<\varepsilon$, and
\item $\sup_{\tau^k(K_j)}|g-a_{j,k}|<\varepsilon_{j,k}$.
\end{enumerate}
This proves the density of $\bigcup_{k\in\N}\cV_{j,k}$.
It follows that $\bigcap_{j\in\N}\bigcup_{k\in\N}\cV_{j,k}\neq\emptyset$ since $\cO(X)$ is a Baire space.
This means that there exists a holomorphic function $\varphi\in\cO(X)$ such that for each $j\in\N$ there exists $k\in\N$ such that ${\ev}(f\circ\varphi\circ\tau^k,\tau^k)\in\cU_j$.
Then $\ev(f\circ\varphi,\id_X)\in\cP$ is a $\cP$-universal map for $\tau$.
\end{proof}

By Theorem \ref{thm:cdomoka} and Theorem \ref{thm:cdomuniv}, we obtain the implication $(1)\implies(2)$ in Theorem \ref{thm:univoka}.
Let us prove the remaining implication $(3)\implies(1)$.

\begin{proof}[Proof of Theorem \ref{thm:univoka}]
Let $\Omega\Subset\C^n$ be a bounded convex domain.
There exists a generalized translation $\tau\in\Aut(\C\times\Omega)$ of the form $\tau=\sigma\times\id_\Omega$ (see Example \ref{ex:gentransl}).
By assumption, there exists a universal map $F:\C\times\Omega\to Y$ for $\tau$.
Note that there exists an entire curve $f:\C\to\cO(\C\times\Omega,\C\times\Omega)$ such that $\{(\tau\times\id_\Omega)^j\}_{j\in\N}\subset\overline{f(\C)}$. 
Using the postcomposition map $F_*:\cO(\C\times\Omega,\C\times\Omega)\to\cO(\C\times\Omega,Y)$, we obtain a dense entire curve $F_*\circ f:\C\to\cO(\C\times\Omega,Y)$.
Note that this also gives a dense entire curve $\C\to\cO(\Omega,Y)$.
Thus $Y$ is Oka by Theorem \ref{thm:cdomoka}.
\end{proof}

A Stein manifold $X$ is called an {\it elliptic} Stein manifold if it is also Oka, because it is equivalent to Gromov's ellipticity (see \cite[Proposition 5.15.2]{For1}).
The following corollary is an analogue of Corollary \ref{cor:ddomuniv} for the $\C$-dominability.

\begin{cor}\label{cor:cdomuniv}
Let $X$ be an elliptic Stein manifold, $\tau\in\Aut X$ a generalized translation and $Y$ a complex space.
Let $\cP$ be a $\tau^*$-invariant path component of $\cO(X,Y)$ and assume that $\tau$ is homotopic to the identity map $\id_X$.
Then $\cP$ is $\C$-dominated if and only if there exists a $\cP$-universal map for $\tau$.
\end{cor}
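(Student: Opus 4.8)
The plan is to prove the two implications separately, since the forward one is essentially immediate from the results already established, while the converse is where the two extra hypotheses (ellipticity of $X$ and $\tau\simeq\id_X$) are genuinely used.

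For the implication that $\C$-dominability of $\cP$ yields a $\cP$-universal map for $\tau$, I would simply invoke Theorem \ref{thm:cdomuniv}. An elliptic Stein manifold is in particular a Stein space, and all the remaining hypotheses of that theorem ($\tau$ a generalized translation, $\cP$ a $\tau^*$-invariant path component which is $\C$-dominated) are satisfied, so the existence of a $\cP$-universal map follows at once.

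For the converse, suppose $F\in\cP$ is a $\cP$-universal map for $\tau$, so that $\{F\circ\tau^j\}_{j\in\N}$ is dense in $\cP$. I would mimic the converse argument in Corollary \ref{cor:ddomuniv}, replacing the disc by the line. Let $\cP_0\subset\cO(X,X)$ be the path component containing $\id_X$, and consider the postcomposition map $F_*:\cO(X,X)\to\cO(X,Y)$, $g\mapsto F\circ g$, which is continuous and satisfies $F_*(\id_X)=F\in\cP$; since $\cP_0$ is path-connected, its image is path-connected and contains $F$, so $F_*(\cP_0)\subset\cP$. The argument then rests on two points: (i) all iterates $\tau^j$ lie in $\cP_0$, and (ii) $\cP_0$ is $\C$-dominated. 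Point (ii) is where ellipticity enters: because $X$ is simultaneously Stein and Oka, Theorem \ref{thm:cdomoka} applies to the space $\cO(X,X)$ and furnishes a dense entire curve $f:\C\to\cP_0$.

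The main obstacle is point (i), and this is precisely where the hypothesis $\tau\simeq\id_X$ is needed. Composing the homotopy $\tau\simeq\id_X$ with itself shows that $\tau^j\simeq\id_X$ as continuous maps for every $j$. Since $X$ is Oka and Stein, two holomorphic self-maps of $X$ that are homotopic already lie in the same path component of $\cO(X,X)$: the inclusion $\cO(X,X)\hookrightarrow\mathcal{C}(X,X)$ into the space of continuous self-maps is a weak homotopy equivalence, so it induces a bijection on path components; alternatively, one applies the parametric Oka principle of Theorem \ref{thm:okaprinciple} to the homotopy with $P=[0,1]$ and $P_0=\{0,1\}$ (the endpoints being already holomorphic) to deform it into a holomorphic path. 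Either way, $\{\tau^j\}_{j\in\N}\subset\cP_0$. To conclude, I would check that $F_*\circ f:\C\to\cP$ has dense image: by density of $f$ in $\cP_0$ and continuity of $F_*$ one has $F_*(\cP_0)\subset\overline{F_*(f(\C))}$, hence $\{F\circ\tau^j\}=F_*(\{\tau^j\})\subset\overline{F_*(f(\C))}$; since the left-hand set is dense in $\cP$ by universality of $F$ and $F_*(f(\C))\subset\cP$, it follows that $F_*\circ f$ is dense in $\cP$, which is exactly the $\C$-dominability of $\cP$.
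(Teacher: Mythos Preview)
Your proposal is correct and follows essentially the same approach as the paper: the forward implication is Theorem~\ref{thm:cdomuniv}, and for the converse you use the Oka principle (Theorem~\ref{thm:okaprinciple}) to place the iterates $\tau^j$ in the path component $\cP_0\subset\cO(X,X)$ of $\id_X$, apply Theorem~\ref{thm:cdomoka} to get a dense entire curve $f:\C\to\cP_0$, and push forward via $F_*$ exactly as in the ``if'' part of Corollary~\ref{cor:ddomuniv}. The paper's proof is terser (it only sketches the converse by referring back to Corollary~\ref{cor:ddomuniv}), but the argument is the same; the only cosmetic difference is that the paper first shows $\tau\in\cP_0$ and then deduces $\tau^j\in\cP_0$ by composing paths in $\cO(X,X)$, whereas you first compose the continuous homotopies and then invoke the Oka principle.
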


\begin{proof}
We only need to prove the ``if'' part.
By Theorem \ref{thm:okaprinciple}, $\tau$ lies in the path component $\cP\subset\cO(X,X)$ containing $\id_X$.
Thus $\{\tau^j\}_{j\in\N}\subset\cP$.
By Theorem \ref{thm:cdomoka}, $\cP$ is $\C$-dominated.
The rest of the proof goes through just as in Corollary \ref{cor:ddomuniv}.
\end{proof}

\section*{Acknowledgement}

I would like to thank my supervisor Katsutoshi Yamanoi for his interest and encouragement.

\end{document}